\newtheorem{theorem}{Theorem}[section]
\newtheorem{proposition}[theorem]{Proposition}
\newtheorem{lemma}[theorem]{Lemma}
\newtheorem{corollary}[theorem]{Corollary}
\theoremstyle{definition}
\newtheorem{definition}[theorem]{Definition}
\newtheorem{remark}[theorem]{Remark}
\numberwithin{equation}{section}
\begin{document}

\baselineskip=15pt

\title[Branched Holomorphic Cartan Geometries]{Branched Holomorphic Cartan 
Geometries and Calabi--Yau manifolds}

\author[I. Biswas]{Indranil Biswas}

\address{School of Mathematics, Tata Institute of Fundamental
Research, Homi Bhabha Road, Mumbai 400005, India}

\email{indranil@math.tifr.res.in}

\author[S. Dumitrescu]{Sorin Dumitrescu}

\address{Universit\'e C\^ote d'Azur, CNRS, LJAD, France}

\email{dumitres@unice.fr}

\subjclass[2010]{53B21, 53C56, 53A55}

\keywords{Complex projective structure, branched Cartan geometry, Calabi--Yau 
manifold, complex surface.}

\date{}

\begin{abstract}
We introduce the concept of a {\it branched holomorphic Cartan geometry}. It 
generalizes to higher dimension the definition of branched (flat) complex 
projective structure on a Riemann surface introduced by Mandelbaum \cite{M1}.
This new framework is much more flexible than that of
the usual holomorphic Cartan geometries. We show that all compact 
complex projective manifolds admit a branched flat holomorphic projective structure. 
We also give an example of a non-flat branched holomorphic normal projective 
structure on a compact complex surface. It is known that no compact complex surface admits such a
structure with empty branching 
locus. We prove that non-projective compact simply connected K\"ahler 
Calabi--Yau manifolds do not admit any branched holomorphic projective structures. The 
key ingredient of its proof is the following result of independent interest: {\it If $E$ 
is a holomorphic vector bundle over a compact simply connected K\"ahler Calabi--Yau 
manifold, and $E$ admits a holomorphic connection, then $E$ is a trivial holomorphic 
vector bundle, and any holomorphic connection on $E$ is trivial.}
\end{abstract}

\maketitle

\tableofcontents

\section{Introduction}

The uniformization theorem for Riemann surfaces asserts that any Riemann surface is 
isomorphic either to the projective line ${\mathbb C}P^1$, or to a quotient of 
$\mathbb C$, or of the unit disk in $\mathbb C$, by a discrete group of projective 
transformations (lying in the M\"obius group $\text{PGL}(2, {\mathbb C})$). In 
particular, any Riemann surface $X$ admits a holomorphic atlas with coordinates in 
${\mathbb C}P^1$ and transition maps in $\text{PGL}(2, {\mathbb C})$. This defines 
a {\it (flat) complex projective structure} on $X$. Complex projective structure
on Riemann surfaces were introduced in connection with the study of the second order ordinary 
differential equations on complex domains and they had a very major role to play in 
understanding the framework of uniformization theorem \cite{Gu,StG}.

The complex projective line acted on by the M\"obius group is a geometry in the 
sense of Klein's Erlangen program in which he proposed to study Euclidean, affine and 
projective geometries in the unifying frame of the homogeneous model spaces $G/H$, 
where $G$ is a finite dimensional Lie group and $H$ a closed subgroup in $G$.

Following Ehresmann \cite{Eh}, a manifold $X$ is locally modelled on a homogeneous 
space $G/H$, if $X$ admits an atlas with charts in $G/H$ and transition maps given 
by elements in $G$ using its left-translation action on
$G/H$. Any $G$-invariant geometric feature of $G/H$ will have an 
intrinsic meaning on $X$.

Elie Cartan generalized Klein's homogeneous model spaces to {\it Cartan geometries} 
(or {\it Cartan connections}) (see definition in Section \ref{s2.1}). We recall 
that these are geometrical structures infinitesimally modelled on homogeneous 
spaces $G/H$. A Cartan geometry associated to the affine (respectively, 
projective) geometry is classically called an affine (respectively, projective) 
connection. A Cartan geometry on a manifold $X$ is equipped with a curvature tensor 
(see definition in Section \ref{s2.1}) which vanishes exactly when $X$ is locally 
modelled on $G/H$ in the sense of Ehresmann \cite{Eh}. In such a situation the 
Cartan geometry is called {\it flat}.

In this article we study holomorphic Cartan geometries on compact complex manifolds 
of complex dimension at least two. Contrary to the situation of Riemann surfaces, 
holomorphic Cartan geometries in higher dimension are not always flat. Moreover, 
for a compact complex manifold, to admit a holomorphic Cartan geometry is a very 
stringent condition: indeed most of the compact complex manifolds do not admit any holomorphic 
Cartan geometry.

In \cite{KO}, Kobayashi and Ochiai proved that compact complex surfaces admitting a 
holomorphic projective connection are biholomorphic either to the complex 
projective plane ${\mathbb C}P^2$, or to a quotient of an open set in ${\mathbb 
C}P^2$ by a discrete group of projective transformations acting properly and 
discontinuously on it. In particular, such a compact complex
surface also admits a flat complex projective structure 
(modelled on ${\mathbb C}P^2$). In this list of compact complex surfaces admitting 
(flat) complex projective structures, the only {\it projective} ones are ${\mathbb 
C}P^2$, abelian varieties (and their unramified finite quotients) and quotients of 
the ball (complex hyperbolic plane).

Another source of inspiration for this paper is the work of Mandelbaum \cite{M1, M2} who 
introduced and studied {\it branched affine and projective structures} on Riemann 
surfaces. According to his definition, branched projective structures on Riemann 
surfaces are given by some holomorphic atlas where local charts are finite branched 
coverings on open sets in ${\mathbb C}P^1$ and transition maps lie in 
$\text{PGL}(2, {\mathbb C})$. Such structures arise naturally in the study of
conical hyperbolic structures, and also when one consider ramified coverings.

Here we define a more general notion of {\it branched holomorphic Cartan geometry} 
on a complex manifold $X$ (see Definition \ref{def1}), which is valid also in higher 
dimension and encompass non-flat geometries. We show that the notion of curvature
continues to hold, and in fact the curvature
vanishes exactly when there is a holomorphic atlas where local charts are 
branched holomorphic maps to the model $G/H$. Two local charts agree up to the 
action on $G/H$ of an element in $G$. The geometric description of the flat case follows the 
description in the usual case: there exists a branched holomorphic developing map from the universal 
cover of $X$ to the model $G/H$ which is a local biholomorphism away from a 
divisor. This developing map is equivariant with respect to the monodromy 
homomorphism (which is a group homomorphism from the fundamental group of $X$ into 
$G$, unique up to post-composition by inner automorphisms of $G$).

This new notion of branched Cartan geometry is much more flexible than the usual 
one. For example, all compact complex projective manifolds admit a branched flat 
holomorphic projective structure (see Proposition \ref{algebraic proj struct}).

We also prove that there exist branched {\it normal} holomorphic projective 
connections (see definition in Section \ref{s2.2}) on compact surfaces which are 
not flat (see Proposition \ref{normal}). This is not the case for holomorphic 
projective connections with empty branching set, meaning any normal projective
structure on a compact complex surface is known to be automatically flat \cite{D2}.

The following is proved in Theorem \ref{thm2}:

{\it If $E$ is a holomorphic 
vector bundle over a compact simply connected K\"ahler Calabi--Yau manifold, and $E$ 
admits a holomorphic connection, then $E$ is a trivial holomorphic vector bundle 
equipped with the trivial connection.}

This result, which is of independent interest, is related to the classification of 
branched holomorphic Cartan geometries on Calabi--Yau manifolds. It yields 
Corollary \ref{final} that asserts the following:

\begin{enumerate}
\item \textit{Any branched holomorphic Cartan geometry of type $G/H$, with $G$ complex affine Lie group, on a compact simply connected (K\"ahler) 
Calabi--Yau manifold is flat. Consequently, the model $G/H$ of the Cartan geometry must be 
compact.}

\item \textit{Non-projective compact simply connected 
K\"ahler Calabi--Yau manifolds do not admit any branched holomorphic projective 
structure.}
\end{enumerate}

The structure of this paper is as follows. Section \ref{s2} introduces the 
main notation and definitions. Section \ref{s3} gives interesting examples of 
branched holomorphic Cartan geometries and contains the proofs of Proposition 
\ref{algebraic proj struct} and Proposition \ref{normal}. In Section \ref{s4} we 
give a criterion (Theorem \ref{thm1}) for the existence of a branched holomorphic 
Cartan geometry. In Section \ref{s5} we study holomorphic projective structures 
on compact parallelizable manifolds. Section \ref{Calabi--Yau} deals with branched 
holomorphic Cartan geometries on Calabi--Yau manifolds, and it contains the proofs of 
Theorem \ref{thm2} and Corollary \ref{final}.

\section{Holomorphic Cartan geometry and branched holomorphic Cartan geometry}\label{s2}

\subsection{Holomorphic Cartan geometry}\label{s2.1}

We first recall the definition of a holomorphic Cartan geometry.

Let $G$ be a connected complex Lie group and $H\, \subset\, G$ a connected
complex Lie subgroup. The Lie algebras of $H$ and $G$ will be denoted by
$\mathfrak h$ and $\mathfrak g$ respectively.

Let $X$ be a connected complex manifold and
\begin{equation}\label{e1}
f\,: E_H\, \longrightarrow\, X
\end{equation}
a holomorphic principal $H$--bundle on $X$. Let
\begin{equation}\label{e2}
E_G\, :=\, E_H\times^H G \, \stackrel{f_G}{\longrightarrow}\, X
\end{equation}
be the holomorphic principal $G$--bundle on $X$ obtained by extending the
structure group of $E_H$ using the inclusion of $H$ in $G$. So, $E_G$ is the
quotient of $E_H\times G$ where two points $(c_1,\, g_1),\, (c_2,\, g_2)\, \in\,
E_H\times G$ are identified if there is an element $h\, \in\, H$ such that
$c_2\,=\, c_1h$ and $g_2\,=\, h^{-1}g_1$. The projection $f_G$ in \eqref{e2} is induced
by the map $E_H\times G\, \longrightarrow\, X$, $(c,\, g)\,\longmapsto\, f(c)$, where
$f$ is the projection in \eqref{e1}. The action of $G$ on $E_G$ is induced by the
action of $G$ on $E_H\times G$ given by the right--translation action of $G$ on itself.
Let ${\rm ad}(E_H)\,=\, E_H\times^H {\mathfrak h}$ and
${\rm ad}(E_G)\,=\, E_G\times^G {\mathfrak g}$ be the adjoint vector bundles for
$E_H$ and $E_G$ respectively. We recall that ${\rm ad}(E_H)$ (respectively,
${\rm ad}(E_G)$) is the quotient of $E_H\times \mathfrak h$ (respectively, $E_G\times
\mathfrak g$) where two points $(z_1,\, v_1)$ and $(z_2,\, v_2)$ are identified if there
is an element $g\, \in\, H$ (respectively, $g\, \in\, G$) such that
$z_2\, =\, z_1g$ and $v_1$ is taken to $v_2$ by the automorphism of the Lie algebra
$\mathfrak h$ (respectively, $\mathfrak g$) given by automorphism of the Lie group
$H$ (respectively, $G)$ defined by $y\,\longmapsto\, g^{-1}yg$.
We have a short exact sequence of holomorphic vector bundles
on $X$
\begin{equation}\label{e3}
0\, \longrightarrow\, {\rm ad}(E_H)\, \stackrel{\iota_1}{\longrightarrow}\, {\rm ad}(E_G)
\, \longrightarrow\, {\rm ad}(E_G)/{\rm ad}(E_H)\, \longrightarrow\,0\, .
\end{equation}
The holomorphic tangent bundle of a complex manifold $Y$ will be denoted by $TY$. Let
$$
{\rm At}(E_H)\,=\, (TE_H)/H\, \longrightarrow\, X \ \text{ and } \ 
{\rm At}(E_G)\,=\, (TE_G)/G\, \longrightarrow\, X
$$
be the Atiyah bundles for $E_H$ and $E_G$ respectively; see \cite{At}. Let
\begin{equation}\label{e4}
0\, \longrightarrow\, {\rm ad}(E_H)\, \stackrel{\iota_2}{\longrightarrow}\, {\rm At}(E_H)
\, \stackrel{q_H}{\longrightarrow}\, TX\, \longrightarrow\,0
\end{equation}
and
\begin{equation}\label{e5}
0\, \longrightarrow\, {\rm ad}(E_G)\, \stackrel{\iota_0}{\longrightarrow}\, {\rm At}(E_G)
\, \stackrel{q_G}{\longrightarrow}\, TX\, \longrightarrow\,0
\end{equation}
be the Atiyah exact sequences for $E_H$ and $E_G$ respectively; see \cite{At}. The
projection $q_H$ (respectively, $q_G$) is induced by the differential of the map
$f$ (respectively, $f_G$) in \eqref{e1} (respectively,
\eqref{e2}). A holomorphic connection on a holomorphic principal bundle
is defined to be a holomorphic splitting of the Atiyah exact sequence associated to the
principal bundle \cite{At}. Therefore, a holomorphic connection on $E_G$ is a
holomorphic homomorphism
$$
\psi\, :\, {\rm At}(E_G)\, \longrightarrow\, {\rm ad}(E_G)
$$
such that $\psi\circ \iota_0\,=\, \text{Id}_{{\rm ad}(E_G)}$.

A holomorphic Cartan geometry on $X$ of type $G/H$ is a pair $(E_H,\, \theta)$, where
$E_H$ is a holomorphic principal $H$--bundle on $X$ and
$$
\theta\, :\, {\rm At}(E_H)\, \longrightarrow\, {\rm ad}(E_G)
$$
is a holomorphic isomorphism of vector bundles such that
$\theta\circ \iota_2 \,=\,\iota_1$ (see \eqref{e4} and \eqref{e3} for
$\iota_2$ and $\iota_1$ respectively). Therefore, we have 
the following commutative diagram
$$
\begin{matrix}
0 & \longrightarrow & {\rm ad}(E_H) & \stackrel{\iota_2}{\longrightarrow} & {\rm At}(E_H)
& \stackrel{q_H}{\longrightarrow} & TX & \longrightarrow & 0\\
&& \Vert && ~\Big\downarrow\theta && ~\Big\downarrow \phi\\
0 & \longrightarrow & {\rm ad}(E_H) & \stackrel{\iota_1}{\longrightarrow} & {\rm ad}(E_G)
& \longrightarrow & {\rm ad}(E_G)/{\rm ad}(E_H) & \longrightarrow & 0
\end{matrix}
$$
\cite[Ch.~5]{Sh}; the above homomorphism $\phi$ induced by
$\theta$ is evidently an isomorphism.

We can embed $\text{ad}(E_H)$ in ${\rm At}(E_H)\oplus {\rm ad}(E_G)$ by sending any 
$v$ to $(\iota_2(v),\, -\iota_1(v))$ (see \eqref{e4}, \eqref{e3}). The Atiyah bundle 
${\rm At}(E_G)$ is the quotient $({\rm At}(E_H)\oplus {\rm ad}(E_G))/\text{ad}(E_H)$ 
for this embedding. The inclusion of $\text{ad}(E_G)$ in ${\rm At}(E_G)$ in 
\eqref{e5} is given by the inclusion $\iota_1$ or $\iota_2$ of $\text{ad}(E_G)$ in
${\rm At}(E_H)\oplus {\rm ad}(E_G)$ (note that they give the same homomorphism to
the quotient bundle $({\rm At}(E_H)\oplus {\rm ad}(E_G))/\text{ad}(E_H)$).

Given a holomorphic Cartan geometry $(E_H,\, \theta)$ of type $G/H$ on $X$, the homomorphism
$$
{\rm At}(E_H)\oplus {\rm ad}(E_G)\, \longrightarrow\, {\rm ad}(E_G),\, \ \
(v,\, w) \, \longmapsto\, \theta(v)+w
$$
produces a homomorphism
$$
\theta'\, :\, {\rm At}(E_G)\,=\,
({\rm At}(E_H)\oplus {\rm ad}(E_G))/\text{ad}(E_H)\, \longrightarrow\, {\rm ad}(E_G)
$$
which satisfies the condition that $\theta'\circ \iota_0\,=\, \text{Id}_{{\rm 
ad}(E_G)}$, meaning $\theta'$ is a holomorphic splitting of \eqref{e5}. Therefore, 
$\theta'$ is a holomorphic connection on the principal $G$--bundle $E_G$.

The curvature $\text{Curv}(\theta')$ of the connection $\theta'$ is a holomorphic section
$$
\text{Curv}(\theta')\, \in\, H^0(X,\, \text{ad}(E_G)\otimes \Omega^2_X)\, ,
$$
where $\Omega^i_X\, :=\, \bigwedge\nolimits^i (TX)^*$.

The Cartan geometry $(E_H,\, \theta)$ is called \textit{normal} if
$$
\text{Curv}(\theta')\, \in\, H^0(X,\, \text{ad}(E_H)\otimes \Omega^2_X)
$$
\cite[Ch.~8, \S~2, p.~338]{Sh}.

The Cartan geometry $(E_H,\, \theta)$ is called \textit{flat} if
$$
\text{Curv}(\theta')\,=\,0
$$
\cite[Ch.~5, \S~1, p.~177]{Sh}. Consequently, flat Cartan geometries are normal.

If $(E_H,\, \theta)$ is a holomorphic Cartan geometry, then the isomorphism $\theta$ can be
interpreted as a $\mathfrak g$--valued holomorphic $1$--form $\beta$ on $E_H$ satisfying the
following three conditions:
\begin{enumerate}
\item the homomorphism $\beta\, :\, TE_H\, \longrightarrow\, E_H\times{\mathfrak g}$ is an
isomorphism,

\item $\beta$ is $H$--equivariant with $H$ acting on $\mathfrak g$ via conjugation, and

\item the restriction of $\beta$ to each fiber of $f$ coincides with the Maurer--Cartan form
associated to the action of $H$ on $E_H$.
\end{enumerate}
(See \cite{Sh}.)

\subsection{Developing curves}

This subsection is based on \cite{Br}.
Consider $ f_G\,=\, E_G \,\longrightarrow\, X$ and a second projection map $h_G\,:\, E_G
\,\longrightarrow\, G/H$ 
which associates to each class $(c,\,g) \,\in\, E_G$ the element $gH \,\in\, G/H$.

The differentials of the projections $f_G$ and $h_G$ map the horizontal space of the connection 
$\theta'$ at $(c,\,g) \,\in\, E_G$ isomorphically onto $T_{f_G(c)} X$ and onto $T_{gH} (G/H)$ 
respectively. This defines an isomorphism between $T_{f_G(c)} X$ and $T_{gH} (G/H)$. Hence a 
Cartan geometry provides a family of $1$-jets identifications of the manifold $X$ with the 
model space $G/H$.

Also, the connection $\theta'$ defines a way to lift differentiable curves in $X$ to 
$\theta'$-horizontal curves in $E_G$. Moreover this lift is unique once one specifies the 
starting point of the lifted curve.

\subsection{Branched holomorphic Cartan geometry}\label{s2.2}

\begin{definition}\label{def1}
A \textit{branched} holomorphic Cartan geometry on $X$ of type $G/H$ is a pair $(E_H,\, \theta)$,
where $E_H$ is a holomorphic principal $H$--bundle on $X$ and
$$
\theta\, :\, {\rm At}(E_H)\, \longrightarrow\, {\rm ad}(E_G)
$$
is a holomorphic homomorphism of vector bundles, such that following three conditions hold:
\begin{enumerate}
\item $\theta$ is an isomorphism over a nonempty open subset of $X$, and

\item $\theta\circ \iota_2 \,=\,\iota_1$ (see \eqref{e4} and \eqref{e3}).
\end{enumerate}
\end{definition}

In other words, we have a commutative diagram
\begin{equation}\label{e6}
\begin{matrix}
0 & \longrightarrow & {\rm ad}(E_H) & \stackrel{\iota_2}{\longrightarrow} & {\rm At}(E_H)
& \stackrel{q_H}{\longrightarrow} & TX & \longrightarrow & 0\\
&& \Vert && ~\Big\downarrow\theta && ~\Big\downarrow \phi\\
0 & \longrightarrow & {\rm ad}(E_H) & \stackrel{\iota_1}{\longrightarrow} & {\rm ad}(E_G)
& \longrightarrow & {\rm ad}(E_G)/{\rm ad}(E_H) & \longrightarrow & 0
\end{matrix}
\end{equation}
of holomorphic vector bundles on $X$, such that
$\theta$ is an isomorphism over a nonempty open subset of $X$;
the homomorphism $\phi$ in \eqref{e6} is induced by $\theta$.

Let $U\, \subset\, X$ be the nonempty open subset over which $\theta$ is an 
isomorphism. From the commutativity of \eqref{e6} it follows that $\phi$ is an 
isomorphism exactly over $U$.

\begin{lemma}\label{lem1}
The complement $X\setminus U$ is a divisor.
\end{lemma}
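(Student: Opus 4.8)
The plan is to compare the determinants of the two vector bundles linked by $\theta$, and to exhibit $X\setminus U$ as the zero locus of a holomorphic section of a line bundle. Observe that $\theta\,:\,{\rm At}(E_H)\,\longrightarrow\,{\rm ad}(E_G)$ is a homomorphism between two holomorphic vector bundles of the same rank, namely $\dim_{\mathbb C}G$ (since ${\rm At}(E_H)$ has rank $\dim X+\dim H$ from \eqref{e4}, ${\rm ad}(E_G)$ has rank $\dim G$, and $\theta$ is assumed to be an isomorphism over a nonempty open set, forcing $\dim X+\dim H=\dim G$). Taking top exterior powers, $\theta$ induces a homomorphism of line bundles
\begin{equation}\label{e:det}
\det\theta\,:\,\det {\rm At}(E_H)\,\longrightarrow\,\det {\rm ad}(E_G)\, ,
\end{equation}
equivalently a holomorphic section
$$
s\,:=\,\det\theta\,\in\,H^0\bigl(X,\,(\det {\rm At}(E_H))^*\otimes \det {\rm ad}(E_G)\bigr)\, .
$$

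The next step is the identification of $U$ with the complement of the zero locus of $s$. At any point $x\in X$, the linear map $\theta_x\,:\,{\rm At}(E_H)_x\,\longrightarrow\,{\rm ad}(E_G)_x$ is an isomorphism if and only if $(\det\theta)_x\neq 0$, i.e.\ if and only if $s(x)\neq 0$. Hence $U\,=\,X\setminus s^{-1}(0)$, so $X\setminus U\,=\,s^{-1}(0)$ is the zero locus of a holomorphic section of a holomorphic line bundle. Since $X$ is connected and $U$ is nonempty, $s$ is not identically zero, so $s^{-1}(0)$ is a proper analytic subset; as the zero divisor of a nonzero holomorphic section of a line bundle it is an effective divisor (a hypersurface, possibly empty), which is exactly the assertion. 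In particular one may record that $X\setminus U$ is the divisor of zeros of $\det\theta$, and that over it $\phi$ also fails to be an isomorphism, consistently with the remark following \eqref{e6}.

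I do not anticipate a serious obstacle here; the only point requiring a line of care is the equality of ranks, which is needed for $\det\theta$ in \eqref{e:det} to make sense as a section of a \emph{line} bundle rather than of a bundle of higher rank. This follows from condition (1) of Definition \ref{def1} together with the rank count in the Atiyah sequence \eqref{e4}: the existence of a point where $\theta$ is an isomorphism forces ${\rm rank}\,{\rm At}(E_H)\,=\,{\rm rank}\,{\rm ad}(E_G)$. One subtlety worth a remark is that a priori $s^{-1}(0)$ is a closed analytic subset; it is precisely because it is cut out by a single section of a line bundle that it has pure codimension one (is a divisor) wherever it is nonempty — this is the standard fact that the vanishing locus of a section of a line bundle, if it is a proper subset, is a hypersurface, so no local freeness or normality hypothesis on $X$ is needed.
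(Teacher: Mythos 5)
Your proof is correct and follows essentially the same route as the paper: both identify $X\setminus U$ as the vanishing divisor of the determinant of a bundle map of equal ranks. The only cosmetic difference is that the paper takes $\bigwedge^d\phi$ of the induced map $\phi\,:\,TX\,\longrightarrow\,{\rm ad}(E_G)/{\rm ad}(E_H)$ rather than $\det\theta$; since the diagram \eqref{e6} has the identity on ${\rm ad}(E_H)$, the two determinant sections differ by the canonical factor $\det{\rm ad}(E_H)$ and have the same zero locus, and the paper's choice is what it then names the branching divisor in Definition \ref{def2}.
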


\begin{proof}
Let $d$ be the complex dimension of $X$. The homomorphism $\phi$ in \eqref{e6} produces
a homomorphism
\begin{equation}\label{f1}
\bigwedge\nolimits^d\phi\, :\, \bigwedge\nolimits^d TX\, \longrightarrow\, \bigwedge\nolimits^d
({\rm ad}(E_G)/{\rm ad}(E_H))\, ,
\end{equation}
so $\bigwedge^d\phi$ is a holomorphic section of the line bundle
$(\bigwedge^d ({\rm ad}(E_G)/{\rm ad}(E_H)))\otimes \Omega^d_X$. The homomorphism
$\phi$ is an isomorphism exactly on the complement of the 
divisor associated
to this section $\bigwedge^d\phi$ of $(\bigwedge^d ({\rm ad}(E_G)/{\rm
ad}(E_H)))\otimes \Omega^d_X$. Since $\phi$ is an
isomorphism exactly over $U$, the complement $X\setminus U$ coincides with the support
of the divisor associated to the above section $\bigwedge^d\phi$.
\end{proof}

\begin{definition}\label{def2}
The divisor associated to the above section $\bigwedge^d\phi$ of
$(\bigwedge^d ({\rm ad}(E_G)/{\rm ad}(E_H)))\otimes \Omega^d_X$ will be called the
\textit{branching divisor} for 
the branched holomorphic Cartan geometry $(E_H,\, \theta)$ on $X$.
\end{definition}

As in the case of usual Cartan geometries, consider the homomorphism
$$
{\rm At}(E_H)\oplus {\rm ad}(E_G)\, \longrightarrow\, {\rm ad}(E_G),\, \ \
(v,\, w) \, \longmapsto\, \theta(v)+w\, .
$$
Since ${\rm At}(E_G)\,=\, ({\rm At}(E_H)\oplus {\rm ad}(E_G))/\text{ad}(E_H)$,
the above homomorphism produces a holomorphic connection
\begin{equation}\label{thp}
\theta'\, :\, {\rm At}(E_G)\, \longrightarrow\, {\rm ad}(E_G)
\end{equation}
on $E_G$.

We will call a branched holomorphic Cartan geometry $(E_H,\, \theta)$ to be \textit{normal}
if
$$
\text{Curv}(\theta')\, \in\, H^0(X,\, \text{ad}(E_H)\otimes \Omega^2_X)\, .
$$

We will call a branched holomorphic Cartan geometry $(E_H,\, \theta)$ to be \textit{flat}
if
$$
\text{Curv}(\theta')\,=\,0\, .
$$

If $(E_H,\, \theta)$ is a branched holomorphic Cartan geometry, then the homomorphism $\theta$ can be
interpreted as a $\mathfrak g$--valued holomorphic $1$--form $\beta$ on $E_H$ satisfying the
following three conditions:
\begin{enumerate}
\item the homomorphism $\beta\, :\, TE_H\, \longrightarrow\, E_H\times{\mathfrak g}$ is an
isomorphism over a nonempty open subset of $E_H$,

\item $\beta$ is $H$--equivariant with $H$ acting on $\mathfrak g$ via conjugation, and

\item the restriction of $\beta$ to each fiber of $f$ coincides with the Maurer--Cartan form
associated to the action of $H$ on $E_H$.
\end{enumerate}

\subsection{The developing map}\label{sdm}

Let $X$ be a simply connected complex manifold (it need not be compact), and let $E_H$
be a holomorphic principal $H$--bundle on $X$. Consider the holomorphic principal $G$--bundle
$E_G\,:=\, E_H\times^H G\,\longrightarrow\, X$. Let
$$
\theta\, :\, {\rm At}(E_H)\, \longrightarrow\, {\rm ad}(E_G)
$$
be a branched holomorphic Cartan geometry on $X$ of type $G/H$ such that the
associated connection $\theta'$ on $E_G$ (constructed in \eqref{thp}) is flat.

Since $E_G$ is a flat principal $G$--bundle over a simply connected manifold, we conclude
that $E_G$ is trivializable using the flat connection. Note that the trivialization
of $E_G$ is not quite unique. Once we fix a point $z_0$ of $E_G$, there is a unique
isomorphism $\xi_{z_0}$ of $E_G$ with the trivial principal $G$--bundle $X\times G
\,\longrightarrow\, X$ satisfying the
following two conditions:
\begin{enumerate}
\item $\xi_{z_0}$ takes the connection $\theta'$ on $E_G$ to the trivial connection
on the trivial principal $G$--bundle $X\times G$, and

\item $\xi_{z_0}(z_0) \,=\, (f_G(z_0),\, e)$, where $f_G$
is the projection (as in \eqref{e2}) of $E_G$ to $X$, and $e\, \in\, G$ is the identity element.
\end{enumerate}
If we replace $z_0$ by another point $z'_0$ of $E_G$, then there is a unique element $g$
of $G$ such that the following diagram is commutative:
$$
\begin{matrix}
E_G & = & E_G\\
~~\,~~\,~~\, \Big\downarrow\xi_{z_0} && ~~\,~~\,~~\, \Big\downarrow\xi_{z'_0}\\
X\times G &\stackrel{L_g}{\longrightarrow} & X\times G
\end{matrix}
$$
where $L_g$ is the diffeomorphism $(x,\, h)\, \longmapsto\, (x,\, gh)$ of $X\times G$.

Fix an element $z_0\,\in\, E_G$. Identify $E_G$ with the trivial principal $G$--bundle $X\times G
\,\longrightarrow\, X$ using $z_0$. Using this identification, the reduction $E_H$ of $E_G$
becomes a holomorphic reduction of structure group of $X\times G\,\longrightarrow\,
X$ to the subgroup $H\, \subset\, G$. Now we observe that any holomorphic reduction of $X\times G
\,\longrightarrow\, X$ to $H$ is given by a holomorphic map
$$\varpi \, :\, X\, \longrightarrow\, G/H\, .$$
To see this, let $p_H\, :\, G\, \longrightarrow\, G/H$ be the quotient map. Then
$$(\text{Id}_X\times p_H)^{-1}(\text{graph}(\varpi))\, \subset\, X\times G$$ is a reduction
of structure group to $H$, where $\text{graph}(\varpi)\, \subset\, X\times (G/H)$ is the
graph of $\varpi$ consisting of all points of the form $\{(x,\, \varpi(x))\}_{x\in X}$. Conversely,
any holomorphic reduction of structure group of $X\times G\,\longrightarrow\,
X$ to $H$ gives a holomorphic map from $X$ to $G/H$.

Let
$$
\varpi\, :\, X\, \longrightarrow\, G/H
$$
be the holomorphic map for the reduction $E_H$ of $E_G\,=\, X\times G$ to $H$. This map $\varpi$
is a developing map for the branched holomorphic Cartan geometry $\theta$. If we replace
$z_0$ by another point of $E_G$, then the developing map gets composed with a left--translation
of $G/H$ by an element of $G$.

{}From the definition of a branched holomorphic Cartan geometry of type $G/H$ it follows that
$\varpi$ is a local biholomorphism over the complement of the branching divisor.

\section{Examples of branched holomorphic Cartan geometries}\label{s3}

\subsection{The standard model}

We recall the standard (flat) Cartan geometry of type $G/H$.

Set $X\,=\, G/H$. Let $F_H$ be the holomorphic principal $H$--bundle on $X$ defined by the quotient
map $G\, \longrightarrow\, G/H$; we use the notation $F_H$ instead of $E_H$ because
it is a special case which will play a role later. Identify the Lie algebra $\mathfrak g$ with the
right--invariant vector fields on $G$. This produces an isomorphism of $\text{At}(F_H)$ with
$\text{ad}(F_G)$ and hence a Cartan geometry of type $G/H$ on $X$ (we use the notation
$F_G$ instead of $E_G$ for the same reason as above). Equivalently, the tautological holomorphic
$\mathfrak g$--valued $1$--form on $G\,=\, F_H$ satisfies all the three conditions needed
to define a Cartan geometry of type $G/H$ (see the last paragraph of Section \ref{s2.1}).

The above holomorphic $\mathfrak g$--valued $1$--form on $G\,=\, F_H$ will be denoted by
$\theta_{G,H}$.

\subsection{Flat Cartan geometries}

A $(E_H,\, \theta)$ holomorphic Cartan geometry of type $G/H$ is flat if and only if it is 
locally isomorphic to $(F_H,\,\theta_{G,H})$ \cite[Ch.~5, \S~5, Theorem 5.1]{Sh}.

If a complex manifold $X$ admits a flat holomorphic Cartan geometry of type $G/H$, then $X$ 
admits a covering by open subsets $U_i$ and biholomorphisms onto open subsets
of $G/H$
$$\phi_i \,:\, U_i 
\,\longrightarrow\, G/H$$ such that each transition map $$\phi_{i} \circ \phi_{j}^{-1} \,:\, 
\phi_{j}(U_i \cap U_j)\,\longrightarrow\, \phi_{i }(U_i \cap U_j)$$ is, on each connected 
component, the restriction of an automorphism of $G/H$ given by the left--translation action of
an element $g_{ij} \,\in\, G$ \cite[Ch.~5, \S~5, Theorem 5.2]{Sh}. 

Following Ehresmann, \cite{Eh}, one classically defines then a {\it monodromy 
homomorphism} $\rho$ from the fundamental group $\pi_1(X)$ of $X$ into $G$ and a 
developing map $\delta \,:\, \widetilde{X} \,\longrightarrow\, G/H$ which is a 
$\pi_1(X)$-equivariant local biholomorphism from the universal cover $\widetilde X$ 
into the model $G/H$.

The same strategy was used in Section \ref{sdm} to adapt the proof to the branched case.

\subsection{Construction of branched holomorphic Cartan geometries}\label{secbc}

Let $X$ be a connected complex manifold and
$$
\gamma\, :\, X\, \longrightarrow\, G/H
$$
a holomorphic map such that the differential
$$
d\gamma\, :\, TX\, \longrightarrow\, T(G/H)
$$
is an isomorphism over a nonempty subset of $X$.

The above condition on $d\gamma$ is equivalent to the condition that $\dim X\,=\, \dim (G/H)$ 
with $\gamma(X)$ containing a nonempty open subset of $G/H$. Note that in general, the 
homomorphism $d\gamma$ is always an isomorphism over an open subset of $X$, which may be empty.

Set $E_H$ to be the pullback $\gamma^*F_H$. Note that we have a holomorphic
map $\eta\, :\, E_H\, \longrightarrow\, F_H$ which is $H$--equivariant and
fits in the commutative diagram
$$
\begin{matrix}
E_H & \stackrel{\eta}{\longrightarrow} & F_H\\
\Big\downarrow && \Big\downarrow\\
X & \stackrel{\gamma}{\longrightarrow} & G/H
\end{matrix}
$$
Then $(E_H,\, \eta^*\theta_{G,H})$ defines a branched Cartan geometry of type $G/H$ on $X$.

To describe the above branched Cartan geometry in terms of the Atiyah bundle, first note
that $\text{At}(\gamma^* F_H)$ coincides with the subbundle of the vector bundle
$\gamma^*\text{At}(F_H) \oplus TX$ given by the kernel of the homomorphism
$$
\gamma^*\text{At}(F_H)\oplus TX\, \longrightarrow\, \gamma^*T(G/H)\, ,\ \
(v,\, w)\, \longmapsto\, \gamma^*q_{G,H}(v) -d\gamma(w)\, ,
$$
where $q_{G,H}\, :\, \text{At}(F_H)\, \longrightarrow\, T(G/H)$ is the natural projection
(see \eqref{e4}), and $$d\gamma\,:\, TX\,\longrightarrow\, \gamma^* T(G/H)$$ is the
differential of $\gamma$. Consider the standard Cartan geometry $\theta_{G,H}\, :\,
\text{At}(F_H)\, \stackrel{\sim}{\longrightarrow}\, \text{ad}(F_G)$ of type $G/H$ on
the quotient $G/H$. The restriction of the homomorphism
$$
\gamma^*\text{At}(F_H)\oplus TX\, \longrightarrow\, \gamma^*\text{ad}(F_G)\, ,
\ \ (a,\, b)\, \longmapsto\, \gamma^*\theta_{G,H} (a)
$$
to $\text{At}(\gamma^*F_H)\, \subset\, \gamma^*\text{At}(F_H)\oplus TX$ is a homomorphism
$$
\text{At}(\gamma^*F_H)\, \longrightarrow\, \text{ad}(\gamma^*F_G)\,=\, \gamma^*\text{ad}(F_G)
\,=\, \text{ad}(E_G)\, ,
$$
which defines a branched holomorphic Cartan geometry of type $G/H$ on $X$.

The divisor of $X$ over which the above branched Cartan geometry of type $G/H$ on $X$ fails to 
be a Cartan geometry evidently coincides with the divisor over which the
differential $d\gamma$ fails to be an isomorphism.

The curvature of the holomorphic connection on $E_G$ associated to the above branched 
Cartan geometry of type $G/H$ on $X$ vanishes identically. Indeed, this
follows immediately from the fact that the standard Cartan geometry $\theta_{G,H}$ is flat.
In particular, this branched Cartan geometry on $X$ is normal.

The developing map for this flat branched Cartan geometry on $X$ is the map $\gamma$ itself.

Conversely, let $X$ be a complex manifold endowed with a branched flat holomorphic Cartan 
geometry with branching divisor $D$. Then Section \ref{sdm} (which is an adaptation in the branched case of
the proof of Theorem 5.2 in \cite[Ch.~5, \S~5]{Sh})
shows that $X$ admits a covering by open (simply connected) subsets $U_i$ such that for each $i$ there exists
a holomorphic map $\phi_i \,:\, 
U_i \,\longrightarrow\, G/H$ which is a local biholomorphism on
$U_i \setminus (U_i \cap D)$; 
moreover, for every pair $i,\, j$, each connected component of the overlap $U_i \cap U_j$, there
exists a $g_{ij}\,\in\, G$ such that $g_{ij} \circ f_j\,=\,f_i$ on the entire connected component.

Then the Ehresmann method \cite{Eh} (based by analytic continuation of charts along paths) 
defines a monodromy morphism $\rho \,:\, \pi_1(X)\,\longrightarrow\, G$ and a developing map 
$\delta \,:\, \widetilde{X} \,\longrightarrow\, G/H$ which is a local biholomorphism away from 
the pull-back of $D$ to the universal cover.

\subsection{Branched flat affine and projective structures}

Let us recall the standard model G/H of the affine geometry.

Consider the semi-direct product ${\mathbb C}^d\rtimes\text{GL}(d, {\mathbb C})$ for the standard 
action of $\text{GL}(d, {\mathbb C})$ on ${\mathbb C}^d$. This group ${\mathbb 
C}^d\rtimes\text{GL}(d, {\mathbb C})$ is identified with the group of all affine transformations 
of ${\mathbb C}^d$. Set $H\,=\, \text{GL}(d, {\mathbb C})$ and $G\,=\, {\mathbb 
C}^d\rtimes\text{GL}(d, {\mathbb C})$.

A {\it holomorphic affine structure} (or equivalently {\it holomorphic affine connection}) on a 
complex manifold $X$ of dimension $d$ is a holomorphic Cartan geometry of type $G/H$. This
terminology comes from the fact that the bundle $F_H$ will be automatically isomorphic to the 
holomorphic frame bundle of $X$ and the form $\theta_{G,H}$ defines a holomorphic connection in 
the holomorphic tangent bundle of $X$. Conversely, any holomorphic connection in the holomorphic 
tangent bundle of $X$ uniquely defines a holomorphic Cartan geometry of type $G/H$ (where
$G$ and $H$ are as above). This connection 
is torsionfree exactly when the Cartan geometry is normal \cite{MM}. For more details on the 
equivalence between the several definitions of a holomorphic affine connection (especially with 
the one seeing the connection as an operator $\nabla$ acting on local holomorphic vector fields 
and satisfying the Leibniz rule), the reader is referred to \cite{MM,Sh}.

A branched holomorphic Cartan geometry of type ${\mathbb C}^d\rtimes\text{GL}(d, {\mathbb C}) 
/\text{GL}(d, {\mathbb C})$ will be called a {\it branched holomorphic affine structure} or a 
{\it branched holomorphic affine connection}.

We also recall that a {\it holomorphic projective structure} (or a {\it holomorphic projective connection}) on a complex manifold $X$ of dimension $d$ is a 
holomorphic Cartan geometry of type $\text{PGL}(d+1,{\mathbb C})/Q$, where $Q\, \subset\, 
\text{PGL}(d+1,{\mathbb C})$ is the maximal parabolic subgroup that fixes a given point for the 
standard action of $\text{PGL}(d+1,{\mathbb C})$ on ${\mathbb C}P^d$ (the space of lines in 
${\mathbb C}^{d+1}$). In particular, there is a standard holomorphic projective structure on 
$\text{PGL}(d+1,{\mathbb C})/Q\,=\, {\mathbb C}P^d$. Locally a holomorphic projective connection is an equivalence class of holomorphic affine connections, where
two affine connections are considered to be equivalent if they admit the same unparametrized geodesics. The projective connection is normal exactly when it admits a local representative which is a torsionfree
affine connection \cite{MM,OT}.

We will call a branched holomorphic Cartan 
geometry of type $\text{PGL}(d+1,{\mathbb C})/Q$ a {\it branched holomorphic projective structure} or a {\it branched holomorphic projective connection.}

\begin{proposition} \label {algebraic proj struct}
Every compact complex projective manifold admits a branched flat holomorphic
projective structure.
\end{proposition}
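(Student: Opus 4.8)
The plan is to reduce the assertion to the construction of branched flat holomorphic Cartan geometries given in Section~\ref{secbc}. By that construction, to equip a compact complex projective manifold $X$ of complex dimension $d$ with a branched flat holomorphic projective structure --- that is, a branched holomorphic Cartan geometry of type $\text{PGL}(d+1,{\mathbb C})/Q$ --- it is enough to produce a holomorphic map
$$
\gamma\, :\, X\, \longrightarrow\, \text{PGL}(d+1,{\mathbb C})/Q\,=\, {\mathbb C}P^d
$$
whose differential $d\gamma\, :\, TX\,\longrightarrow\, \gamma^*T({\mathbb C}P^d)$ is an isomorphism over a nonempty open subset of $X$; equivalently (as noted in Section~\ref{secbc}), a holomorphic map $\gamma$ with $\dim X\,=\,\dim {\mathbb C}P^d$ and $\gamma(X)$ containing a nonempty open subset of ${\mathbb C}P^d$. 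Given such a $\gamma$, Section~\ref{secbc} yields the pair $(E_H,\, \eta^*\theta_{G,H})$, with $E_H\,=\,\gamma^*F_H$, which is a branched holomorphic Cartan geometry of type $\text{PGL}(d+1,{\mathbb C})/Q$ on $X$, and this geometry is flat because the standard Cartan geometry $\theta_{G,H}$ on ${\mathbb C}P^d$ is flat. So it remains to build $\gamma$.

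To this end I would use a generic linear system. Since $X$ is projective, fix a very ample holomorphic line bundle $L$ on $X$; then $\dim_{\mathbb C}H^0(X,\, L)\,\geq\, d+1$, because $X$ embeds into ${\mathbb P}(H^0(X,\, L)^*)$. Choose $d+1$ sections $s_0,\, \ldots,\, s_d\,\in\, H^0(X,\, L)$, picked one at a time so that each new section does not vanish identically on any irreducible component of the common zero locus of the previous ones; this is possible because $|L|$ is base--point--free, so no positive--dimensional subvariety of $X$ lies in the zero locus of every section of $L$. A dimension count then shows the common zero locus of $s_0,\, \ldots,\, s_d$ is empty, so
$$
\gamma\, :\, X\, \longrightarrow\, {\mathbb C}P^d\, ,\qquad x\,\longmapsto\, [s_0(x)\, :\,\cdots\, :\, s_d(x)]\, ,
$$
is a well--defined holomorphic map with $\gamma^*{\mathcal O}_{{\mathbb C}P^d}(1)\,=\, L$.

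Next I would verify that $\gamma$ is a finite morphism. As $X$ is compact, $\gamma$ is proper, so it suffices to check that every fiber of $\gamma$ is finite. If some fiber $\gamma^{-1}(y)$ contained a compact complex curve $C$, then $L|_C\,=\, \gamma^*{\mathcal O}_{{\mathbb C}P^d}(1)|_C$ would be holomorphically trivial, since $C$ is mapped to the single point $y$; this contradicts the ampleness of $L$. Hence all fibers of $\gamma$ are finite, $\gamma$ is finite, and since $\dim_{\mathbb C}X\,=\, d\,=\, \dim_{\mathbb C}{\mathbb C}P^d$ with ${\mathbb C}P^d$ irreducible, the image $\gamma(X)$ is a closed irreducible analytic subvariety of dimension $d$ in ${\mathbb C}P^d$, hence equals ${\mathbb C}P^d$; in particular it contains a nonempty open subset, so $\gamma$ satisfies the hypothesis of Section~\ref{secbc} and the resulting $(E_H,\, \eta^*\theta_{G,H})$ is the desired branched flat holomorphic projective structure. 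The argument has no serious obstacle; the only points needing some care are the two ``genericity and positivity'' facts --- that $d+1$ generic sections of a very ample line bundle on a $d$--fold have empty common zero locus, and that $\gamma^*{\mathcal O}_{{\mathbb C}P^d}(1)\,=\,L$ ample forces $\gamma$ finite. Alternatively one could replace the linear--system construction by a generic linear projection: embed $X$ into some ${\mathbb C}P^N$ and compose projections away from generic points of the successive ambient projective spaces, each such projection being a finite morphism on the current image because a line through a generic point cannot lie in a $d$--dimensional subvariety of a projective space of dimension $>d$, arriving after $N-d$ steps at a finite morphism $X\,\longrightarrow\, {\mathbb C}P^d$.
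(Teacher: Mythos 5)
Your proposal is correct and follows essentially the same route as the paper: both reduce the statement to producing a finite surjective holomorphic map $\gamma\,:\,X\,\longrightarrow\,{\mathbb C}P^d$ and then pull back the standard flat projective structure via the construction of Section~\ref{secbc}. The paper builds $\gamma$ by successive generic linear projections from an embedding $X\,\subset\,{\mathbb C}P^N$ (the very argument you offer as your alternative), while your primary construction via a generic $(d+1)$--dimensional subsystem of a very ample linear system is the same map in different clothing; both versions are complete and correct.
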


\begin{proof}
Let $X$ be a compact complex projective manifold of complex dimension $d$. Then there exists a finite
surjective algebraic, hence holomorphic, morphism
$$
\gamma\, :\, X\, \longrightarrow\, {\mathbb C}P^d\, . 
$$

Indeed, one proves that the smallest integer $N$ for which there exists a finite 
morphism $f$ from $X$ to ${\mathbb C}P^N$ is $d$. If $N \,>\,d$, then there exists $P 
\,\in\, {\mathbb C}P^N \setminus f(X)$; now consider the projection $\pi \,:\, {\mathbb 
C}P^N \setminus \{P \} \,\longrightarrow\, {\mathbb C}P^{N-1}$. The fibers of $\pi \circ f$ must be 
finite (otherwise $f(X)$ would contain a line through $P$, hence $P$). Since $\pi 
\circ f$ is a proper morphism with finite fibers, it must be finite.

Now we can pull back the standard holomorphic projective structure on ${\mathbb C}P^d$
using the above map $\gamma$ to get a branched holomorphic projective structure on $X$.
\end{proof}

\begin{proposition}\label{simply connected}\mbox{}
\begin{enumerate}
\item[(i)] Simply connected compact complex manifolds do not admit any branched flat
holomorphic affine structure.

\item[(ii)] Simply connected compact complex manifolds admitting a branched flat holomorphic
projective structure are Moishezon.
\end{enumerate}
\end{proposition}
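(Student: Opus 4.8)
For part (i), suppose $X$ is a simply connected compact complex manifold carrying a branched flat holomorphic affine structure, with model $G/H = ({\mathbb C}^d\rtimes\mathrm{GL}(d,{\mathbb C}))/\mathrm{GL}(d,{\mathbb C}) = {\mathbb C}^d$. Since $X$ is simply connected, Section \ref{sdm} provides a developing map $\varpi\colon X \longrightarrow {\mathbb C}^d$ which is a holomorphic map and a local biholomorphism away from the branching divisor $D$. I would exploit compactness of $X$: each coordinate of $\varpi$ is a global holomorphic function on the compact complex manifold $X$, hence constant by the maximum principle. Therefore $\varpi$ is a constant map, so $d\varpi \equiv 0$; but $d\varpi$ must be an isomorphism on the nonempty open set $X\setminus D$, a contradiction. (One should check that the developing-map construction of Section \ref{sdm} indeed identifies $\varpi$ with the holomorphic reduction map $X\to G/H$; for the affine model $G/H$ is literally ${\mathbb C}^d$ as a complex manifold, so its entries are genuine holomorphic functions.) The only subtlety is making sure $X\setminus D$ is nonempty, which is exactly condition (1) of Definition \ref{def1}.

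For part (ii), let $X$ be simply connected compact carrying a branched flat holomorphic projective structure, with model ${\mathbb C}P^d = \mathrm{PGL}(d+1,{\mathbb C})/Q$. Again Section \ref{sdm} gives a developing map $\varpi\colon X\longrightarrow {\mathbb C}P^d$ that is a local biholomorphism off the branching divisor $D$. The plan is to show $\varpi$ is \emph{surjective} and \emph{generically finite}, so that $X$ is a finite-degree meromorphic cover of ${\mathbb C}P^d$, hence Moishezon (a compact complex manifold admitting a surjective meromorphic — here even holomorphic — map of finite degree onto a projective manifold is Moishezon, since it then carries $d$ algebraically independent meromorphic functions pulled back from ${\mathbb C}P^d$). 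Generic finiteness is immediate: on $X\setminus D$ the map $\varpi$ is a local biholomorphism, so its fibers are discrete there, and since $X$ is compact the generic fiber is finite. Surjectivity follows because $\varpi(X)$ is a compact, hence closed, analytic subset of ${\mathbb C}P^d$ containing the nonempty open set $\varpi(X\setminus D)$; a closed subset with nonempty interior in the connected manifold ${\mathbb C}P^d$ is everything. Then $\varpi\colon X\to {\mathbb C}P^d$ is surjective with finite generic fiber, so $\dim X = d$ and $X$ has $d$ algebraically independent meromorphic functions, i.e.\ $X$ is Moishezon.

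The main obstacle — and the step deserving the most care — is the passage from "$\varpi$ surjective and generically finite onto a projective manifold" to "$X$ is Moishezon." One has to argue that the $d$ coordinate rational functions of ${\mathbb C}P^d$ pull back under $\varpi$ to meromorphic functions on $X$ that remain algebraically independent; this is clear because a nontrivial polynomial relation among the pullbacks would, by surjectivity of $\varpi$, descend to a relation among the coordinates on a nonempty open subset of ${\mathbb C}P^d$, which is absurd. Thus the algebraic (meromorphic) dimension $a(X)$ equals $\dim_{\mathbb C} X = d$, which is precisely the definition of $X$ being Moishezon. A minor point to handle carefully is whether $\varpi$ could fail to be defined globally as a genuine holomorphic map when $X\neq\widetilde X$; but here $X$ is simply connected, so $\widetilde X = X$ and Section \ref{sdm} applies directly, giving $\varpi$ as an honest holomorphic map on all of $X$.
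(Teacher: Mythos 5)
Your proposal is correct and follows essentially the same route as the paper: both parts rest on the developing map from the flat branched structure, with (i) using the maximum principle to force the map to $\mathbb{C}^d$ to be constant (contradicting that it is a local biholomorphism off the branching divisor) and (ii) using the generically finite dominant map to $\mathbb{C}P^d$ to produce $d$ algebraically independent meromorphic functions, hence algebraic dimension $d$. The paper states these steps more tersely; your write-up simply fills in the justifications.
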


\begin{proof} 
(i) If, by contradiction, a simply connected compact complex manifold $X$ admits a branched flat 
holomorphic affine structure, then the developing map $\delta \,:\, X \,\longrightarrow\, 
{\mathbb C}^d$ is holomorphic and nonconstant, which is a contradiction.

(ii) If $X$ is a simply connected manifold of complex dimension $d$ admitting a branched flat
holomorphic projective structure, then its developing map is a holomorphic map $\delta \,:\, X \,
\longrightarrow\, {\mathbb C}P^d$ which is a local biholomorphism away from a divisor $D$ in
$X$. Thus, the algebraic dimension of $X$ must be $d$; consequently, $X$ is Moishezon.
\end{proof}

Since any given compact K\"ahler manifold is Moishezon if and only if it is projective, one gets
the following:

\begin{corollary}\label{corollary simply connected}
Non-projective simply connected K\"ahler manifolds do not admit any branched flat holomorphic
projective structure.
\end{corollary}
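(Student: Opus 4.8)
The plan is to obtain this as a formal consequence of Proposition \ref{simply connected}(ii) combined with the classical characterization of projective manifolds within the class of compact Kähler manifolds. So I would argue by contradiction: suppose a compact simply connected Kähler manifold $X$ admits a branched flat holomorphic projective structure. Since $X$ is simply connected, its universal cover is $X$ itself, so the developing map is a holomorphic map $\delta\, :\, X\,\longrightarrow\, \mathbb{C}P^d$ which is a local biholomorphism away from a divisor. By Proposition \ref{simply connected}(ii) this forces $X$ to be Moishezon.

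Next I would invoke Moishezon's theorem: a compact complex manifold that is simultaneously Moishezon and Kähler is projective algebraic. Applying this to $X$ shows that $X$ is projective, contradicting the standing hypothesis that $X$ is non-projective. Taking the contrapositive gives exactly the statement of the corollary.

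The only point needing any care — and it has already been dealt with in the proof of Proposition \ref{simply connected}(ii) — is the implication ``such a structure $\Rightarrow$ Moishezon'': because $\delta$ is holomorphic and a local biholomorphism off a divisor, it is generically of maximal rank, so pulling back $d$ algebraically independent rational functions on $\mathbb{C}P^d$ under $\delta$ produces $d$ algebraically independent meromorphic functions on $X$, whence the algebraic dimension of $X$ equals $\dim_{\mathbb C}X$. Since the argument uses nothing beyond this input together with Moishezon's theorem, there is essentially no obstacle here; the corollary is a short deduction rather than a result requiring a new idea.
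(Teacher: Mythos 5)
Your argument is correct and is exactly the paper's: the corollary is stated there as an immediate consequence of Proposition \ref{simply connected}(ii) together with Moishezon's theorem that a compact K\"ahler Moishezon manifold is projective. Nothing further is needed.
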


In particular, non-projective $K3$ surfaces do not admit any branched flat holomorphic projective 
structure.

\subsection{Branched normal holomorphic projective structure on complex surfaces}

In \cite{KO}, Kobayashi and Ochiai classified all compact complex surfaces admitting a holomorphic 
projective structure (connection). All of them happen to be isomorphic to quotient of open 
subsets of ${\mathbb C}P^2$ by discrete subgroups of $\text{PGL}(3, {\mathbb C})$ acting properly 
and discontinuously. Consequently, all of them also admit a flat holomorphic projective structure. 
Among those surfaces, the only projective ones are the following : ${\mathbb C}P^2$, surfaces 
covered by the ball and the abelian varieties (and their finite unramified quotients).

Moreover, it is known that every normal projective structure (connection) on a compact complex 
surface is automatically flat \cite{D2}.

Proposition \ref{algebraic proj struct} shows that the class of compact complex surfaces 
admitting a branched holomorphic projective structure is much broader. Moreover, we have the 
following:

\begin{proposition}\label{normal}
There exists branched holomorphic projective structures on compact complex surfaces which
are normal, but not flat.
\end{proposition}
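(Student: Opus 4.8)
The plan is to exhibit an explicit compact complex surface together with a branched holomorphic projective structure which is normal but not flat, and the natural place to look is among surfaces that already carry an honest (unbranched) holomorphic projective structure, so that the branched one can be built as a ``perturbation'' of it by the pullback construction of Section \ref{secbc}. Concretely, I would start with a compact complex surface $X$ admitting a holomorphic projective structure in the Kobayashi--Ochiai list and a nonconstant holomorphic map $\gamma\colon X\to {\mathbb C}P^2$ whose differential is generically an isomorphism but degenerates along a divisor $D$; pulling back the standard flat projective structure of ${\mathbb C}P^2$ gives a \emph{flat} branched projective structure with branching divisor $D$, which is automatically normal. That alone is not enough, since we want a non-flat example, so the idea is to modify this flat branched structure by adding to $\theta$ a suitable $\mathrm{ad}(E_G)$-valued holomorphic $1$-form supported so as to keep conditions (1) and (2) of Definition \ref{def1}, keep the curvature in $H^0(X,\mathrm{ad}(E_H)\otimes\Omega^2_X)$ (normality), but make the curvature nonzero.

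The key steps, in order, would be: (i) pick the surface — an abelian surface, or a finite branched cover of ${\mathbb C}P^2$ or of an abelian surface, seems most tractable, because there $H$-bundles and $\mathrm{ad}$-bundles are easy to write down and $\Omega^2_X$ is trivial; (ii) write the branched projective Cartan geometry explicitly as a $\mathfrak{pgl}(3,{\mathbb C})$-valued $1$-form $\beta$ on the principal $Q$-bundle $E_H$, using the frame-bundle description and the fact (recalled in Section \ref{s2.2}) that $\theta$ corresponds to such a $\beta$; (iii) compute $\mathrm{Curv}(\theta')$, which for the pulled-back structure is zero, and then perturb $\beta$ by $\beta\mapsto\beta+\sigma$ with $\sigma$ an $H$-equivariant $\mathfrak q$-valued holomorphic $1$-form (so that the projective analogue of the ``affine'' deformation lands in $\mathrm{ad}(E_H)$, forcing normality); (iv) check that the new curvature, which will be essentially $d\sigma+[\beta,\sigma]+\tfrac12[\sigma,\sigma]$, is a nonzero section of $\mathrm{ad}(E_H)\otimes\Omega^2_X$; (v) verify that $\theta$ remains an isomorphism on a dense open set so that we indeed have a branched projective structure, and record its branching divisor. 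An alternative, possibly cleaner route is to take a surface $X$ that is a ramified cover $\gamma\colon X\to S$ of a surface $S$ carrying a \emph{non-flat} normal projective connection — but since non-flat normal projective connections on compact surfaces do not exist by \cite{D2}, this route is closed, which is exactly why the branching is essential and why the deformation in step (iii) is the crux.

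The main obstacle I expect is step (iv) together with step (iii): one must choose the perturbation $\sigma$ so that simultaneously (a) $\beta+\sigma$ still satisfies the three defining conditions of a branched Cartan geometry (in particular the Maurer--Cartan normalization along the fibers of $f$, which constrains the ``vertical'' part of $\sigma$ to vanish), (b) the curvature stays in the normal subspace $\mathrm{ad}(E_H)\otimes\Omega^2_X\subset\mathrm{ad}(E_G)\otimes\Omega^2_X$ — this is a genuine restriction on the $\mathfrak g/\mathfrak h$-component of the curvature and is what ``normal'' means for a projective connection, translating into the vanishing of the torsion-type part — and (c) the resulting curvature is genuinely nonzero. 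Balancing (a)--(c) is a finite-dimensional linear-algebra problem fibered over $X$, but making it globally consistent on a compact $X$ (so that $\sigma$ exists as a global holomorphic object, which typically needs $\Omega^1_X$ to have enough sections, hence the choice of an abelian surface or an étale quotient thereof, or a cover with trivial canonical bundle) is where the real work lies. Once a consistent $\sigma$ is produced on such an $X$, conditions (1)--(2) of Definition \ref{def1} hold on the complement of an explicit divisor, normality is built in by construction, and non-flatness is checked by evaluating the curvature $2$-form at a single point.
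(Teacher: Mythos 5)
Your high-level strategy -- start from a flat connection and add a globally defined $\mathfrak g$-valued holomorphic $1$-form so that the resulting curvature is nonzero but lands in $\mathrm{ad}(E_H)\otimes\Omega^2_X$ -- is indeed the strategy of the paper's proof. But as written the proposal is a plan rather than a proof: the decisive steps (iii)--(iv) are left open, and the one concrete mechanism you offer for normality is incorrect. Taking the perturbation $\sigma$ to be $\mathfrak q$-valued does \emph{not} force the curvature into $\mathrm{ad}(E_H)\otimes\Omega^2_X$: the curvature contains the cross term $[\beta\wedge\sigma]$, and since $\mathfrak q$ is a parabolic subalgebra but not an ideal of $\mathfrak{sl}(3,\mathbb C)$, the bracket $[\mathfrak g,\mathfrak q]$ is not contained in $\mathfrak q$ (e.g.\ $[E_{21},E_{32}]=-E_{31}\notin\mathfrak q$). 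Worse, a purely $\mathfrak q$-valued modification does not change the induced map $\phi\colon TX\to\mathrm{ad}(E_G)/\mathrm{ad}(E_H)$ at all, so it cannot by itself create the soldering; the perturbation \emph{must} have components transverse to $\mathfrak h$ for condition (1) of Definition \ref{def1} to hold, and the whole difficulty is to choose those transverse components so that the wedge-square nevertheless falls back into $\mathfrak h$. That is exactly the trick the paper uses and the one missing from your outline: on $X=Y\times Y$ with $Y$ of genus at least two, take $E_H$ to be the \emph{trivial} $Q$-bundle with trivial flat connection $D_0$ and set $\theta'=D_0+A$ where $A$ has the pullbacks $p_1^*\alpha_1,\,p_1^*\alpha_1,\,p_2^*\alpha_2$ placed in the matrix positions $(1,3)$, $(2,1)$, $(3,1)$. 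The positions $(2,1)$ and $(3,1)$ are precisely the ones outside $\mathfrak q$, which makes $\theta$ generically an isomorphism, while $dA=0$ and $A\wedge A$ is diagonal (hence $\mathfrak q$-valued) and equal to $\pm\,p_1^*\alpha_1\wedge p_2^*\alpha_2\neq 0$.

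Two further points. First, your insistence on starting from a surface in the Kobayashi--Ochiai list, or from a flat branched structure pulled back from ${\mathbb C}P^2$, is an unnecessary detour: the paper's surface $Y\times Y$ carries no unbranched projective structure, and the flat ``background'' is simply the trivial connection on the trivial bundle, not a pulled-back developing map. Second, your suggested choice of an abelian surface is actively dangerous: there the holomorphic $1$-forms are nowhere vanishing, so the analogous construction would have empty branching divisor and would collide with the result of \cite{D2} that normal projective connections on compact surfaces are flat. The hypothesis $g(Y)\geq 2$ is what guarantees the $\alpha_i$ have zeros, hence a genuinely nonempty branching divisor, which is why branching is essential here.
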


\begin{proof}
Let $Y$ be a compact connected Riemann surface of genus at least two. Fix two holomorphic
$1$--forms $\alpha_1,\, \alpha_2\, \in\, H^0(X,\, \Omega^1_X)$ that are linearly
independent. Set $X\,=\, Y\times Y$.
Let $Q\, \subset\, \text{PGL}(3, {\mathbb C})$ be the
maximal parabolic subgroup that fixes the point $(1,\, 0, \, 0)\, \in\, {\mathbb C}P^2$
for the standard action of $\text{PGL}(3, {\mathbb C})$ on ${\mathbb C}P^2$. Set
$H\,=\, Q$ and $G\,=\, \text{PGL}(3, {\mathbb C})$.

Let $E_H\,=\, X\times H\,\stackrel{f}{\longrightarrow}\, X$ be the trivial holomorphic principal 
$H$--bundle on $X$. So, the corresponding holomorphic principal $G$--bundle $E_G$ is the trivial 
holomorphic principal $G$--bundle $X\times G\,\longrightarrow\, X$. The adjoint vector bundles 
$\text{ad}(E_H)$ and $\text{ad}(E_G)$ are the trivial vector bundles $X\times\mathfrak h\, 
\longrightarrow\, X$ and $X\times\mathfrak g\,\longrightarrow\, X$ respectively. The 
trivialization of $E_H$ produces a trivial holomorphic connection on $E_H$. This connection 
defines a holomorphic splitting of the Atiyah exact sequence in \eqref{e4}. Hence we have
$$
\text{At}(E_H)\,=\, \text{ad}(E_G)\oplus TX\,=\, (X\times{\mathfrak h})\oplus TX\, .
$$
Now let $$\theta\, :\, \text{At}(E_H)\,\longrightarrow\, \text{ad}(E_G)\,=\,X\times\mathfrak g$$
be the holomorphic homomorphism which over any point $(y_1,\, y_2)\,\in\, Y\times Y\,=\, X$
is defined by 
$$
(w,\, (v_1,\, v_2))\, \longmapsto\, w +
\begin{pmatrix}
0 & 0 & \alpha_1(y_1)(v_1)\\
\alpha_1(y_1)(v_1) & 0 &0\\
\alpha_2(y_2)(v_2) & 0 &0
\end{pmatrix}\, , \ w\,\in\, {\mathfrak h}\, ,\ v_i\, \in\, T_{y_i}Y\, .
$$
Note that the Lie algebra $\mathfrak g$ is the space of $3\times 3$ complex matrices
of trace zero, while $\mathfrak h$ is the subalgebra of $\mathfrak g$ consisting of
matrices $(a_{i,j})_{i,j=1}^3$ with complex entries
such that $a_{2,1}\,=\, 0\,=\, a_{3,1}$. Therefore,
$\theta$ is an isomorphism over the nonempty open subset of $X$ consisting of all
$(y_1,\, y_2)\, \in\, Y\times Y$ such that both $\alpha_1(y_1)$ and $\alpha_2(y_2)$ are
nonzero.

Let $\theta'$ be the holomorphic connection on $E_G\,=\, X\times G\,\longrightarrow\, X$ associated to
$\theta$ (see \eqref{thp}). To describe $\theta'$, let $D_0$ denote the trivial
holomorphic connection on $E_G\,=\, X\times G$ given by its trivialization. Let 
$$
p_i\, :\, X\,=\, Y\times Y\,\longrightarrow\, Y\, , \ \ i\,=\, 1,\, 2
$$
be the projection to the $i$--th factor. Then we have
$$
\theta'\,=\, D_0+ \begin{pmatrix}
0 & 0 & p^*_1\alpha_1\\
p^*_1\alpha_1 & 0 &0\\
p^*_2\alpha_2 & 0 &0
\end{pmatrix}\, ;
$$
note that $\text{ad}(E_G)\,=\, X\times {\mathfrak g}$, and 
$$\begin{pmatrix}
0 & 0 & p^*_1\alpha_1\\
p^*_1\alpha_1 & 0 &0\\
p^*_2\alpha_2 & 0 &0
\end{pmatrix} \, \in\, H^0(X,\, \text{ad}(E_G)\otimes \Omega^1_X)
$$
because the diagonal entries are zero. Therefore, the curvature 
$\text{Curv}(\theta')$ of the connection $\theta'$ has the following
expression:
$$
\text{Curv}(\theta')\,=\,
\begin{pmatrix}
0 & 0 & p^*_1\alpha_1\\
p^*_1\alpha_1 & 0 &0\\
p^*_2\alpha_2 & 0 &0
\end{pmatrix} 
\wedge 
\begin{pmatrix}
0 & 0 & p^*_1\alpha_1\\
p^*_1\alpha_1 & 0 &0\\
p^*_2\alpha_2 & 0 &0
\end{pmatrix} 
\,=\,
\begin{pmatrix}
(p^*_1\alpha_1)\wedge (p^*_2\alpha_2) & 0 & 0\\
0 & 0 &0\\
0& 0& (p^*_2\alpha_2)\wedge (p^*_1\alpha_1)
\end{pmatrix} 
$$
Hence we have
$$
\text{Curv}(\theta')\, \in\, H^0(X,\, \text{ad}(E_H)\otimes \Omega^2_X)\, .
$$
So the branched projective structure $(E_H,\, \theta)$ constructed above in
normal. But we have $\text{Curv}(\theta')\,\not=\, 0$.
\end{proof}

We don't know whether (non-projective) compact complex surfaces admitting branched holomorphic 
projective structures are exactly those admitting a branched flat holomorphic projective 
structure.

\section{A criterion}\label{s4}

Let $X$ be a compact connected K\"ahler manifold of complex dimension $d$ equipped with a K\"ahler
form $\omega$. Chern classes will always mean ones with real coefficients. For a torsionfree
coherent analytic sheaf $V$ on $X$, define
\begin{equation}\label{deg}
\text{degree}(V)\,:=\, (c_1(V)\cup\omega^{d-1})\cap [X]\, \in\, {\mathbb R}\, .
\end{equation}
The degree of a divisor $D$ on $X$ is defined to be $\text{degree}({\mathcal O}_X(D))$.
The degree of a general coherent analytic sheaf on $X$ is the degree of its torsionfree
quotient.

Fix an effective divisor $D$ on $X$. Fix a holomorphic principal $H$--bundle $E_H$ on $X$.

\begin{proposition}\label{thm1}
If ${\rm degree}(\Omega^1_X)-{\rm degree}(D)\, \not=\, {\rm degree}({\rm ad}(E_H))$, then there 
is no branched holomorphic Cartan geometry of type $G/H$ on $X$ with branching divisor $D$ (see 
Definition \ref{def2}). In particular, if $D\, \not=\, 0$ and ${\rm degree}(\Omega^1_X)\,\leq\, 
{\rm degree}({\rm ad}(E_H))$, then there is no branched holomorphic Cartan geometry of type 
$G/H$ on $X$ with branching divisor $D$.
\end{proposition}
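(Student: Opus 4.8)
The plan is to compare degrees of the vector bundles appearing in the branching divisor section constructed in Lemma~\ref{lem1}. Recall that a branched holomorphic Cartan geometry of type $G/H$ on $X$ with branching divisor $D$ produces a nonzero holomorphic section $\bigwedge^d\phi$ of the line bundle $\left(\bigwedge^d(\mathrm{ad}(E_G)/\mathrm{ad}(E_H))\right)\otimes\Omega^d_X$ whose associated divisor is exactly $D$. The existence of such a section with zero divisor equal to $D$ forces $\mathcal{O}_X(D)$ to be isomorphic to that line bundle; in particular the two sides have the same degree.

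The first step is to compute $\mathrm{degree}\left(\bigwedge^d(\mathrm{ad}(E_G)/\mathrm{ad}(E_H))\right)$. From the short exact sequence \eqref{e3} one has $c_1(\mathrm{ad}(E_G)) = c_1(\mathrm{ad}(E_H)) + c_1(\mathrm{ad}(E_G)/\mathrm{ad}(E_H))$, hence
\[
\mathrm{degree}\left(\textstyle\bigwedge^d(\mathrm{ad}(E_G)/\mathrm{ad}(E_H))\right) = \mathrm{degree}(\mathrm{ad}(E_G)) - \mathrm{degree}(\mathrm{ad}(E_H)).
\]
Now I would use that the adjoint bundle of any principal bundle with structure group a given complex Lie group carries a natural volume, or more simply that $c_1(\mathrm{ad}(E_G))$ is pulled back along the classifying map from a class in $H^2(BG,\mathbb{R})$ associated to the one-dimensional representation $\det\circ\mathrm{Ad}$ of $G$; since $G$ is a connected complex Lie group, $\mathrm{Ad}\colon G\to\mathrm{GL}(\mathfrak{g})$ has image in the (complex) group and $\det\circ\mathrm{Ad}\colon G\to\mathbb{C}^*$ is a holomorphic character. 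The key point is that this character is trivial: for $G$ connected, $\det\mathrm{Ad}(g)=\exp(\mathrm{tr}\,\mathrm{ad}(\log\cdots))$ along one-parameter subgroups and $\mathrm{tr}\,\mathrm{ad}(v)=0$ because $\mathrm{ad}(v)$ is a commutator-type operator — more precisely $\mathrm{tr}\,\mathrm{ad}(v)$ vanishes on $[\mathfrak{g},\mathfrak{g}]$ and on the center, hence on all of $\mathfrak{g}$. Wait — that last claim needs the derived algebra plus center to span, which holds, but cleanly: $\mathrm{tr}\,\mathrm{ad}([x,y]) = \mathrm{tr}[\mathrm{ad}\,x,\mathrm{ad}\,y]=0$, and $\mathrm{tr}\,\mathrm{ad}(z)=0$ for central $z$, so the character's derivative vanishes and, $G$ being connected, $\det\circ\mathrm{Ad}\equiv 1$. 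Therefore $\mathrm{ad}(E_G)$ has a holomorphic trivialization of its determinant line bundle, so $\mathrm{degree}(\mathrm{ad}(E_G))=0$; likewise $\mathrm{degree}(\mathrm{ad}(E_H))$ need not vanish since $H$ need not be all of $G$, but that is fine — it appears explicitly in the statement.

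Combining, the line bundle carrying $\bigwedge^d\phi$ has degree $-\mathrm{degree}(\mathrm{ad}(E_H)) + \mathrm{degree}(\Omega^d_X)$, and $\mathrm{degree}(\Omega^d_X) = \mathrm{degree}(\det\Omega^1_X) = \mathrm{degree}(\Omega^1_X)$ by definition of degree via $c_1$. Setting this equal to $\mathrm{degree}(\mathcal{O}_X(D)) = \mathrm{degree}(D)$ yields
\[
\mathrm{degree}(\Omega^1_X) - \mathrm{degree}(\mathrm{ad}(E_H)) = \mathrm{degree}(D),
\]
which is exactly the obstruction claimed; contrapositively, if this identity fails there is no such branched Cartan geometry. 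For the final sentence of the proposition: if $D\neq 0$ is effective, then $\mathrm{degree}(D) > 0$ because $\omega$ is Kähler and an effective nonzero divisor has strictly positive degree against $\omega^{d-1}$; so $\mathrm{degree}(\Omega^1_X) - \mathrm{degree}(\mathrm{ad}(E_H)) > 0$ would be forced, contradicting $\mathrm{degree}(\Omega^1_X) \leq \mathrm{degree}(\mathrm{ad}(E_H))$.

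The main obstacle I anticipate is the vanishing $\mathrm{degree}(\mathrm{ad}(E_G)) = 0$: one must justify that $\det\circ\mathrm{Ad}$ is the trivial character of $G$ and that this forces the determinant of the associated bundle to be holomorphically (not merely topologically) trivial with zero degree — here the cleanest route is the Lie-algebra computation $\mathrm{tr}\,\mathrm{ad}(v)=0$ for all $v\in\mathfrak g$ together with connectedness of $G$, which gives an honest reduction of the structure group of $\det\mathrm{ad}(E_G)$ and hence $c_1=0$. Everything else — the exact sequence bookkeeping, identifying $\Omega^d_X$ with $\det\Omega^1_X$, and positivity of effective divisors in the Kähler setting — is routine.
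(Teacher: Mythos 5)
Your overall strategy matches the paper's: read off the degree of the divisor $D$ from the line bundle $\left(\bigwedge^d(\mathrm{ad}(E_G)/\mathrm{ad}(E_H))\right)\otimes\Omega^d_X$ carrying the section $\bigwedge^d\phi$, reduce everything to showing $\mathrm{degree}(\mathrm{ad}(E_G))=0$, and finish with positivity of effective divisors against a K\"ahler class. The bookkeeping and the last step are fine. But the step you yourself flagged is exactly where the argument breaks: it is \emph{not} true that $[\mathfrak g,\mathfrak g]+Z(\mathfrak g)=\mathfrak g$ for a general Lie algebra, and consequently $\mathrm{tr}\,\mathrm{ad}(v)$ need not vanish, i.e.\ $\det\circ\mathrm{Ad}$ need not be the trivial character of $G$. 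The simplest counterexample is the two-dimensional non-abelian Lie algebra ($[x,y]=y$, trivial center, derived algebra of dimension one, $\mathrm{tr}\,\mathrm{ad}(x)=1$). More to the point for this paper, the complex affine group $G=\mathbb C^d\rtimes\mathrm{GL}(d,\mathbb C)$ --- the structure group defining branched affine structures, to which the corollary following this proposition is applied --- is not unimodular: for $v=(0,A)$ one has $\mathrm{tr}\,\mathrm{ad}(v)=\mathrm{tr}(A)$. So your route to $\mathrm{degree}(\mathrm{ad}(E_G))=0$ fails precisely in the case that matters most, and indeed it would have to fail somewhere, since it never uses the Cartan geometry at all and would yield a degree identity for an arbitrary pair $(E_H,D)$.

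The missing idea is that the hypothesis must enter through the holomorphic connection $\theta'$ on $E_G$ constructed in \eqref{thp} from the branched Cartan geometry. That connection induces a holomorphic connection on the associated bundle $\mathrm{ad}(E_G)$, and by Atiyah's theorem a holomorphic vector bundle on a compact K\"ahler manifold admitting a holomorphic connection has vanishing real Chern classes; in particular $c_1(\mathrm{ad}(E_G))=0$ and hence $\mathrm{degree}(\mathrm{ad}(E_G))=0$. This is the paper's argument, and with this substitution the rest of your proof goes through verbatim.
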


\begin{proof}
Let $(E_H,\, \theta)$ be a branched holomorphic Cartan geometry of type $G/H$ on $X$ with 
branching divisor $D$. Consider the homomorphism $\bigwedge^d\phi$ in \eqref{f1}. Since $D$ is the 
divisor for the corresponding holomorphic section of the line bundle $(\bigwedge^d ({\rm 
ad}(E_G)/{\rm ad}(E_H)))\otimes \Omega^d_X$, we have
$$
\text{degree}(D)\,=\, \text{degree}((\bigwedge\nolimits^d ({\rm ad}(E_G)/{\rm ad}(E_H)))
\otimes \Omega^d_X)
$$
\begin{equation}\label{f2}
=\, \text{degree}({\rm ad}(E_G)) - \text{degree}({\rm ad}(E_H)) 
+ {\rm degree}(\Omega^1_X)\, .
\end{equation}
Recall that $E_G$ has a holomorphic connection $\theta'$ corresponding to $\theta$.
It induces a holomorphic connection on $\text{ad}(E_G)$. Hence we have
$c_1({\rm ad}(E_G)) \,=\, 0$ \cite[Theorem~4]{At}, which implies that
$\text{degree}({\rm ad}(E_G)) \,=\, 0$. Therefore, from \eqref{f2} it follows that
\begin{equation}\label{e7}
{\rm degree}(\Omega^1_X)-{\rm degree}(D)\, =\, {\rm degree}({\rm ad}(E_H))\, .
\end{equation}
Hence there
is no branched holomorphic Cartan geometry of type $G/H$ on $X$ with branching divisor $D$
if we have ${\rm degree}(\Omega^1_X)-{\rm degree}(D)\, \not=\, {\rm degree}({\rm ad}(E_H))$.

If $D\,\not=\, 0$, then $\text{degree}(D)\, >\, 0$. Hence in that case \eqref{e7} fails if
we have ${\rm degree}(\Omega^1_X)\,\leq\, {\rm degree}({\rm ad}(E_H))$.
\end{proof}

\begin{corollary}\label{corollaire deg}\mbox{}
\begin{enumerate}
\item[(i)] If ${\rm degree}(\Omega^1_X)\, <\, 0$, then there is no branched holomorphic affine structure
on $X$.

\item[(ii)] If ${\rm degree}(\Omega^1_X)\, =\, 0$, then all branched holomorphic affine structures
on $X$ are actually holomorphic affine structures.
\end{enumerate}
\end{corollary}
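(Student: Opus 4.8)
The plan is to apply Proposition \ref{thm1} with the specific group data of the affine model, namely $H \,=\, \text{GL}(d, {\mathbb C})$ and $G \,=\, {\mathbb C}^d \rtimes \text{GL}(d, {\mathbb C})$, and to compute ${\rm degree}({\rm ad}(E_H))$ in terms of ${\rm degree}(\Omega^1_X)$. The first step is the identification, recalled in Section \ref{s3}, that any branched holomorphic affine structure forces $E_H$ to be (isomorphic to) the holomorphic frame bundle of $X$; this is because condition (2) in Definition \ref{def1} together with the structure of $\mathfrak g \,=\, {\mathbb C}^d \oplus {\mathfrak gl}(d,{\mathbb C})$ makes the quotient ${\rm ad}(E_G)/{\rm ad}(E_H)$ canonically isomorphic to $TX$ via $\phi$, and $\phi$ being generically an isomorphism pins down $E_H$ as the frame bundle. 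Consequently ${\rm ad}(E_H) \,=\, {\rm End}(TX)$, so $c_1({\rm ad}(E_H)) \,=\, 0$ (the trace part cancels), whence ${\rm degree}({\rm ad}(E_H)) \,=\, 0$.

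With this in hand, the equality \eqref{e7} from the proof of Proposition \ref{thm1} becomes
$$
{\rm degree}(\Omega^1_X) - {\rm degree}(D) \,=\, 0\, ,
$$
i.e.\ ${\rm degree}(D) \,=\, {\rm degree}(\Omega^1_X)$ for the branching divisor $D$ of any branched holomorphic affine structure on $X$. For part (i), if ${\rm degree}(\Omega^1_X) \,<\, 0$, then $D$ would be an effective divisor of negative degree, which is impossible since ${\rm degree}({\mathcal O}_X(D)) \,\geq\, 0$ for an effective divisor with respect to a K\"ahler class; hence no branched holomorphic affine structure exists. For part (ii), if ${\rm degree}(\Omega^1_X) \,=\, 0$, then ${\rm degree}(D) \,=\, 0$ for any branched affine structure; but an effective divisor of degree zero on a compact K\"ahler manifold must be the zero divisor, so the branching divisor is empty and the branched affine structure is an honest (unbranched) holomorphic affine structure.

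The only genuinely substantive point is the claim that a branched affine structure has $E_H$ equal to the frame bundle, and hence ${\rm degree}({\rm ad}(E_H)) \,=\, 0$; everything else is the positivity of effective divisors against a K\"ahler class plus Proposition \ref{thm1}. One should be slightly careful that the argument needs only the numerical fact ${\rm degree}({\rm ad}(E_H)) \,=\, 0$, which in fact holds for \emph{any} $H$--bundle with $H \,=\, \text{GL}(d,{\mathbb C})$ whose adjoint bundle has vanishing first Chern class — but here it is automatic since $E_H$ carries the induced connection $\theta'$ restricted appropriately, or more directly since ${\rm ad}(E_H) \,=\, {\rm End}(E)$ for the associated rank-$d$ bundle $E$ and ${\rm End}(E)$ always has $c_1 \,=\, 0$. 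So I expect no real obstacle; the write-up is essentially an immediate specialization of Proposition \ref{thm1}.
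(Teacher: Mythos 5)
Your proposal is correct and follows essentially the same route as the paper: both arguments reduce to the single numerical fact that $\text{degree}(\text{ad}(E_H))\,=\,0$ for any principal $\text{GL}(d,{\mathbb C})$--bundle (the paper deduces this from self-duality of ${\mathfrak g}{\mathfrak l}(d,{\mathbb C})$ under the trace form, you from $c_1(\text{End}(E))\,=\,0$ for the associated rank-$d$ bundle --- the same fact), and then invoke \eqref{e7} together with the positivity of nonzero effective divisors against the K\"ahler class. Your opening claim that $E_H$ must be the frame bundle of $X$ is not actually justified in the branched case (since $\phi$ is only generically an isomorphism, $E_H$ is the frame bundle of a vector bundle that is generically, not globally, isomorphic to $TX$), but as you yourself observe this is harmless because only $\text{degree}(\text{ad}(E_H))\,=\,0$ is needed.
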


\begin{proof}
Set $H\,=\,
\text{GL}(d, {\mathbb C})$ and $G\,=\, {\mathbb C}^d\rtimes\text{GL}(d, {\mathbb C})$. Recall
that a branched holomorphic affine structure on $X$ is a 
branched holomorphic Cartan geometry on $X$ of type $G/H$, where $H$ and $G$ are as above.
Let $(E_H,\, \theta)$ be a branched holomorphic affine structure on the compact K\"ahler manifold
$(X,\, \omega)$ of dimension $d$. The homomorphism $$\text{M}(d, {\mathbb C})\otimes
\text{M}(d, {\mathbb C})\, \longrightarrow\, \mathbb C\, ,\ \ A\otimes B\, \longmapsto\,
\text{Trace}(AB)$$ is nondegenerate and $\text{GL}(d, {\mathbb C})$--invariant. In other words, 
the Lie algebra $\mathfrak h$ of $H\,=\, \text{GL}(d, {\mathbb C})$
is self-dual as an $H$--module. Hence we have $\text{ad}(E_H)\,=\, \text{ad}(E_H)^*$, in particular,
the equality $$\text{degree}(\text{ad}(E_H))\,=\,0$$ holds.

As noted before, for a nonzero effective divisor $D$ we have $\text{degree}(D)\, >\, 0$.
Therefore, the corollary follows from Proposition \ref{thm1}.
\end{proof}

\begin{remark}
Let $X$ be a rationally connected compact complex manifold. The proof of Theorem 4.1 in \cite{BM} 
extends to branched Cartan geometries on $X$. In other words, any branched Cartan geometry of 
type $G/H$ on $X$ is flat and it is given by a holomorphic map $X\, \longrightarrow\, G/H$ (see 
Section \ref{secbc}). This implies that $G/H$ is compact.
\end{remark}

\section{Holomorphic projective structure on parallelizable manifolds}\label{s5}

A complex manifold is called \textit{parallelizable} if its holomorphic tangent bundle
is holomorphically trivial.
We recall that, by a theorem of Wang \cite{Wa}, compact complex parallelizable 
manifolds are 
isomorphic to quotients $G /\Gamma$ of complex Lie groups $G$ by a {\it cocompact} 
lattice $\Gamma\,\subset\, G$ (recall that cocompact (or normal) lattices are those 
for which the quotient is compact). Such a quotient is known to be K\"ahler if and only 
if $G$ is abelian.

All the compact complex parallelizable manifolds admit a holomorphic affine structure 
(connection) given by the trivialization of the holomorphic tangent bundle (by right-invariant 
vector fields). As soon as $G$ is non-abelian the holomorphic affine connection for which 
right-invariant vector fields are parallel have non-vanishing torsion and, consequently, it is 
not flat.

We will prove the following:

\begin{proposition}\label{parallelizable affine} Let $G$ be a complex semi-simple Lie group
and $\Gamma$ a cocompact lattice in $G$. Then the quotient $G/\Gamma$ does not admit any branched
flat affine structure.
\end{proposition}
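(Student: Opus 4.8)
The plan is to argue by contradiction using the degree criterion of Proposition \ref{thm1}, after first pinning down the relevant degrees on $X = G/\Gamma$. Since $G$ is semisimple, the holomorphic tangent bundle of $X$ is holomorphically trivial (Wang's theorem, via right-invariant vector fields), so $\Omega^1_X$ is also trivial and hence $\text{degree}(\Omega^1_X) = 0$; moreover $X$ is a compact complex parallelizable manifold, but it need not be K\"ahler, so the first step is to check that the degree-based machinery of Section \ref{s4} (which was set up for K\"ahler $X$) still applies here, or else to replace it by a purely Chern-class / first-Chern-class argument that does not require a K\"ahler form. In fact the cleanest route is to observe that for a compact complex parallelizable $G/\Gamma$ the canonical bundle $K_X = \bigwedge^d \Omega^1_X$ is holomorphically trivial, so $c_1(X) = 0$ in $H^2(X,\mathbb{R})$, and this is all that the argument below needs.

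Next I would suppose, for contradiction, that $(E_H,\, \theta)$ is a branched flat affine structure on $X$ of type $G'/H'$ with $G' = \mathbb{C}^d \rtimes \text{GL}(d,\mathbb{C})$ and $H' = \text{GL}(d,\mathbb{C})$, with branching divisor $D$. As in the proof of Corollary \ref{corollaire deg}, the Lie algebra $\mathfrak{h}'$ of $H'$ is self-dual as an $H'$-module, so $\text{ad}(E_{H'})$ is self-dual and $c_1(\text{ad}(E_{H'})) = 0$. The associated connection $\theta'$ on $E_{G'}$ is flat, hence $E_{G'}$ has a holomorphic connection and $c_1(\text{ad}(E_{G'})) = 0$ by \cite[Theorem~4]{At}. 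Then the line-bundle identity underlying Lemma \ref{lem1} and \eqref{f2} gives $c_1(\mathcal{O}_X(D)) = c_1(\text{ad}(E_{G'})) - c_1(\text{ad}(E_{H'})) + c_1(\Omega^1_X) = 0$ in $H^2(X,\mathbb{R})$. So the branching divisor $D$ is numerically (indeed, cohomologically) trivial; if one can show $D$ must then be the zero divisor, the branched flat affine structure is an honest flat affine structure on $X$.

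At that point the argument is finished by invoking the fact (recalled in the excerpt's discussion of parallelizable manifolds, and due to the non-abelian case of Wang's description) that when $G$ is non-abelian semisimple the canonical affine connection given by right-invariant vector fields has non-vanishing torsion, and more to the point that $G/\Gamma$ carries \emph{no} flat holomorphic affine structure at all: a flat affine structure would yield a developing map $\widetilde{X} = G \to \mathbb{C}^d$ equivariant for a monodromy $\rho\colon \Gamma \to \mathbb{C}^d \rtimes \text{GL}(d,\mathbb{C})$, and one derives a contradiction from semisimplicity of $G$ — any holomorphic map from the semisimple group $G$ to $\mathbb{C}^d$ that is suitably equivariant must be constant (using that $G = [G,G]$ and that holomorphic $1$-forms on $G$ invariant under translation correspond to $\mathfrak{g}^*$ on which the image of the bracket spans everything), contradicting that the developing map is a local biholomorphism somewhere. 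The main obstacle is this last step: one must rule out flat affine structures on $G/\Gamma$ rigorously, and the honest way is to lift to $G$, express the flat affine structure as a holomorphic $\mathfrak{g}'$-valued Maurer--Cartan-type form and show the translation part is forced to vanish because $H^0(G, \Omega^1_G)$ decomposed under left translations has no copy of the trivial $G$-representation sitting the right way — equivalently, that $\mathfrak{g}$ semisimple has no nonzero $\mathfrak{g}$-equivariant linear functional to an abelian algebra — which then collapses the affine structure to a linear (GL-type) connection and contradicts flatness via the non-vanishing curvature computation available from the structure constants of $\mathfrak{g}$.

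I would also remark that the degree inequality in Proposition \ref{thm1} disposes directly of the case $D \neq 0$ once $\text{degree}(\Omega^1_X) = 0 \leq \text{degree}(\text{ad}(E_H))=0$ is in force, so really only the $D = 0$ case — an unbranched flat affine structure on a compact non-abelian parallelizable manifold — needs the representation-theoretic input, keeping the write-up short.
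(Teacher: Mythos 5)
Your proposal has two genuine gaps, both located exactly where you yourself flag "the main obstacle."

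First, the reduction to the unbranched case. Your Chern-class computation correctly gives $c_1(\mathcal{O}_X(D))=0$ in $H^2(X,\mathbb{R})$, but on $X=G/\Gamma$ with $G$ semisimple this does \emph{not} force $D=0$: $X$ is not K\"ahler (the paper recalls that $G/\Gamma$ is K\"ahler only when $G$ is abelian), so there is no K\"ahler class against which a nonzero effective divisor must have positive degree, and on a non-K\"ahler manifold a nonzero effective divisor can perfectly well be cohomologically trivial. So the degree machinery of Proposition \ref{thm1} and Corollary \ref{corollaire deg} is simply unavailable here. The paper's route is entirely different and much stronger: by the Huckleberry--Margulis theorem \cite{HM}, $G/\Gamma$ contains \emph{no} divisor at all, so the branching locus (a divisor by Lemma \ref{lem1}) is automatically empty. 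This is Lemma \ref{regular} in the paper.

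Second, the unbranched case. Your sketch works on $G$ itself and tries to conclude that the developing map $G\to\mathbb{C}^d$ is constant by an equivariance argument, but the developing map is only $\Gamma$-equivariant (via the monodromy), not $G$-equivariant, and $G$ is Stein, so there is no shortage of nonconstant holomorphic maps $G\to\mathbb{C}^d$; likewise $H^0(G,\Omega^1_G)$ is huge, so nothing forces the lifted structure to be invariant a priori. The representation-theoretic observation you invoke (that $\mathrm{Hom}_{\mathfrak g}(\mathfrak g,\mathbb{C}^d)=0$ because $\mathfrak g=[\mathfrak g,\mathfrak g]$) only becomes relevant \emph{after} one knows the affine structure on $G$ is translation-invariant, and you never establish that. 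The paper gets invariance from compactness of the quotient: writing the connection as $\nabla$ on $TX$ and expanding $\nabla_{V_i}V_j$ in the global frame $(V_1,\dots,V_d)$ of (descended) right-invariant fields, the coefficients are global holomorphic functions on the compact $X$, hence constants, so the pull-back of $\nabla$ to $G$ is right-invariant. Only then does it invoke the nontrivial cited theorem of \cite{D4} that a complex semisimple Lie group admits no translation-invariant flat holomorphic affine structure; your closing paragraph gestures at this fact but does not prove it, and the intermediate invariance step is the real content missing from your write-up.
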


The following lemma will be needed in the proof of Proposition \ref{parallelizable affine}.

\begin{lemma} \label{regular} Let $G$ be a complex semi-simple Lie group and $\Gamma$ a cocompact
lattice in $G$. Then any branched holomorphic Cartan geometry on
$X\,=\,G / \Gamma$ has an empty branching set.
\end{lemma}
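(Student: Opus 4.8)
\textbf{Proof plan for Lemma \ref{regular}.}

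The plan is to use the degree criterion of Proposition \ref{thm1} together with the fact that a cocompact lattice quotient of a complex semisimple Lie group is parallelizable. First I would recall that $X = G/\Gamma$ is compact complex parallelizable by Wang's theorem, so its holomorphic tangent bundle $TX$ is holomorphically trivial; in particular $\Omega^1_X$ is a trivial holomorphic vector bundle of rank $d = \dim_{\mathbb C} G$. The immediate consequence is that for any K\"ahler form — wait, here one must be careful, since $G/\Gamma$ is K\"ahler only when $G$ is abelian, which is excluded. So the degree as defined in \eqref{deg} via a K\"ahler form is not directly available, and the argument must be run with a Gauduchon metric (or more simply with the observation that $c_1(\Omega^1_X) = 0$ in $H^2(X,\mathbb R)$ because $\Omega^1_X$ is trivial, hence its degree with respect to \emph{any} Gauduchon metric vanishes). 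I would therefore either extend the degree formalism of Section \ref{s4} to Gauduchon metrics, or — cleaner — argue directly at the level of first Chern classes as follows.

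The core computation is the one already carried out inside the proof of Proposition \ref{thm1}: if $(E_H,\theta)$ is a branched holomorphic Cartan geometry of type $G_0/H$ on $X$ (I rename the structure group $G_0$ to avoid clashing with the Lie group $G$ of the lemma) with branching divisor $D$, then $E_{G_0}$ carries the holomorphic connection $\theta'$, so $c_1(\mathrm{ad}(E_{G_0})) = 0$ by \cite[Theorem 4]{At}; moreover $\mathrm{ad}(E_H)$ is the adjoint bundle of $E_H$, and since $H$ is reductive (indeed $H$ is $\mathrm{GL}(d,\mathbb C)$ for an affine structure, and more generally its Killing-type form makes $\mathfrak h$ self-dual as in the proof of Corollary \ref{corollaire deg}) one gets $\mathrm{ad}(E_H) \cong \mathrm{ad}(E_H)^*$, hence $c_1(\mathrm{ad}(E_H)) = 0$. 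Combining with the divisor identity \eqref{f2}, which is an identity of first Chern classes and does not require K\"ahler-ness,
\[
c_1(\mathcal O_X(D)) = c_1(\mathrm{ad}(E_{G_0})) - c_1(\mathrm{ad}(E_H)) + c_1(\Omega^1_X) = 0 + 0 + 0 = 0
\]
in $H^2(X,\mathbb R)$, using that $c_1(\Omega^1_X)=0$ since $\Omega^1_X$ is holomorphically trivial. Then I would invoke that $D$ is an \emph{effective} divisor whose line bundle $\mathcal O_X(D)$ has a nonzero holomorphic section (the canonical one) and vanishing real first Chern class; on a compact complex manifold an effective divisor with $c_1 = 0$ must be the zero divisor. (This last step is where I should be most careful: the cleanest justification is that $\deg_g \mathcal O_X(D) = 0$ for a Gauduchon metric $g$ forces $D = 0$, since an effective nonzero divisor has strictly positive Gauduchon degree; alternatively one notes that any holomorphic line bundle with a nonzero section and torsion — here trivial — real first Chern class has its section nowhere vanishing.)

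The main obstacle is precisely this metric-analytic point: Section \ref{s4} sets up the degree only for K\"ahler $X$, yet the semisimple parallelizable manifolds $G/\Gamma$ are non-K\"ahler, so I cannot quote Proposition \ref{thm1} verbatim. I expect to spend the bulk of the proof verifying that the positivity-of-degree argument for effective divisors goes through with a Gauduchon metric (which always exists on a compact complex manifold, by Gauduchon's theorem), or equivalently giving the direct Chern-class argument above and then citing the elementary fact that on a compact complex manifold a nontrivial effective divisor cannot have vanishing real first Chern class. Everything else — parallelizability of $G/\Gamma$, triviality of $\Omega^1_X$, the self-duality of $\mathfrak h$, and the vanishing $c_1(\mathrm{ad}(E_{G_0}))=0$ from the holomorphic connection — is either standard or already established in the excerpt, so the lemma follows once the branching divisor is shown to have vanishing first Chern class and hence to be empty.
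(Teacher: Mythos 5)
Your approach diverges from the paper's and, as written, it does not close. The paper's proof is a one-liner of a completely different nature: if the branching set were nonempty it would be a divisor (Lemma \ref{lem1}), and by the Huckleberry--Margulis theorem \cite{HM} a quotient $G/\Gamma$ of a complex semi-simple Lie group by a lattice contains \emph{no} divisors at all; contradiction. No degree computation is needed, and in particular no metric has to be chosen on the non-K\"ahler manifold $G/\Gamma$.

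Your degree-theoretic route has two genuine gaps. First, the lemma concerns an \emph{arbitrary} branched Cartan geometry of type $G_0/H$, and for general $H$ you have no control over $\mathrm{degree}(\mathrm{ad}(E_H))$: the self-duality of $\mathfrak h$ used in Corollary \ref{corollaire deg} is special to $H=\mathrm{GL}(d,\mathbb C)$ (more generally to reductive $H$ with $\mathfrak h$ self-dual), and it fails for instance for the parabolic subgroup $Q\subset \mathrm{PGL}(d+1,\mathbb C)$ occurring in projective structures, where $\det\mathrm{ad}(E_Q)$ is governed by a nontrivial character of $Q$. So the identity \eqref{f2} only yields $\mathrm{degree}(D)=-\,\mathrm{degree}(\mathrm{ad}(E_H))$, which you cannot conclude is zero. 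Second, and more fundamentally, your final step --- ``an effective divisor with vanishing real first Chern class on a compact complex manifold must be zero'' --- is false in the non-K\"ahler setting: on a Hopf surface $H^2(X,\mathbb R)=0$, yet there are nonzero effective divisors (elliptic curves). The Gauduchon degree of a line bundle is not determined by its image in $H^2(X,\mathbb R)$, so vanishing of $c_1(\mathcal O_X(D))$ in de Rham cohomology does not give $\deg_g\mathcal O_X(D)=0$, and the positivity argument for effective divisors cannot be triggered. Both gaps disappear if you instead import the Huckleberry--Margulis nonexistence of hypersurfaces on $G/\Gamma$, which is the actual content of the paper's proof.
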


\begin{proof} Assume, by contradiction, that the branching set is not empty. Then, by
Lemma \ref{lem1} the branching set must be a divisor in $X$. On the other hand, it is known,
\cite{HM}, that $G / \Gamma$ contains no divisor, which is a contradiction.
\end{proof}

Now we go back to the proof of Proposition \ref{parallelizable affine}.

\begin{proof}[{Proof of Proposition \ref{parallelizable affine}}]
Assume, by contradiction, that $X\,=\,G / \Gamma$ admits a branched flat affine structure. Using 
Lemma \ref{regular} the branching set must be empty. Consider then the holomorphic affine 
connection $\nabla$ in the holomorphic tangent bundle $TX$ associated the holomorphic flat affine 
structure. If $d$ is the complex dimension of $X$, denote by $(V_1, V_2, \ldots, V_d)$ a family 
of globally defined holomorphic vector fields on $X$ trivializing $TX$ (the $V_i$'s descend from 
right-invariant vector fields on $G$). For any $i,j$, the holomorphic vector field 
$\nabla_{V_i}V_j$ is also globally defined on $X$ and must be a linear combination of $V_i$'s 
with constant coefficients. It now follows that the pull-back of $\nabla_{V_i}V_j$ to $G$ is a 
right-invariant vector field. This implies that the pull-back to $G$ of $\nabla$ is 
right-invariant. But it is known, \cite{D4}, that a semi-simple complex Lie group does not admit 
translation invariant holomorphic flat affine structures, which is a contradiction.
\end{proof} 

The simplest example is that of compact quotients of $\text{SL}(2, \mathbb{C})$ by lattices 
$\Gamma$: they do not admit any branched flat holomorphic affine structure. However, as we will 
see, they admit flat holomorphic projective structures.

Indeed, the Killing quadratic form on the Lie algebra of $\text{SL}(2, \mathbb{C})$ is nondegenerate. It endows the complex manifold $\text{SL}(2, \mathbb{C})$ with a right-invariant {\it holomorphic Riemannian metric} in the sense of the following definition.

\begin{definition}
A holomorphic Riemannian metric on $X$ is a holomorphic section
$$
g\, \in\, H^0(X,\, \text{S}^2((TX)^*))
$$
such that for every point $x\, \in\, X$ the quadratic form $g(x)$ on the fiber $T_xX$
is nondegenerate. 
\end{definition}

A holomorphic Riemannian metric on a complex manifold of dimension $d$ is a holomorphic Cartan 
geometry of the type $G/H$, where $H$ is the complex orthogonal group $\text{O}(d, \mathbb{C})$ 
and $G$ is the semi-direct product $\mathbb{C}^d \rtimes \text{O}(d, \mathbb{C})$ for the 
standard action of $\text{O}(d, \mathbb{C})$ on $\mathbb{C}^d$ \cite[Ch.~6]{Sh}.

As in the Riemannian and pseudo-Riemannian setting, one associates to a holomorphic 
Riemannian metric $g$ a unique holomorphic affine connection $\nabla$. This 
connection $\nabla$, called the Levi--Civita connection for $g$, is uniquely 
determined by the following two properties:
\begin{itemize}
\item $\nabla$ is torsionfree, and

\item the holomorphic tensor $g$ is parallel with respect to $\nabla$.
\end{itemize}
The curvature of this Levi--Civita connection $\nabla$ vanishes identically if and only if $g$ is 
locally isomorphic to the standard flat model $dz_1^2 + \ldots + dz_n^2$, seen as a homogeneous 
space for the group $G\,=\, \mathbb{C}^d \rtimes \text{O}(d, \mathbb{C})$.

The holomorphic Riemannian metric on $\text{SL}(2, {\mathbb C})$ coming from the Killing 
quadratic form is bi-invariant (since the Killing quadratic form is invariant under the adjoint 
action of $\text{SL}(2, {\mathbb C})$). It has nonzero constant sectional curvature \cite{Gh}. 
Since the Levi--Civita connection of a metric of constant sectional curvature is known to be 
projectively flat, this endows $\text{SL}(2, {\mathbb C})$ with a bi-invariant flat holomorphic 
projective structure. For more details about the geometry of holomorphic Riemannian metrics one 
can see \cite{Gh,D,DZ}.

Interesting exotic deformations of parallelizable manifolds $\text{SL}(2, 
\mathbb{C})/ \Gamma$ were constructed by Ghys in~\cite{Gh}.

The above mentioned deformations in \cite{Gh} are constructed by choosing a group homomorphism 
$$u\,:\, \Gamma \,\longrightarrow\, \text{SL}(2, \mathbb{C})$$ and considering the embedding $ 
\gamma \,\longmapsto\, (u(\gamma),\, \gamma)$ of $\Gamma$ into $\text{SL}(2, \mathbb{C}) \times 
\text{SL}(2, \mathbb{C})$ (acting on $\text{SL}(2, \mathbb{C}$) by left and right translations). 
Algebraically, the action is given by:
$$
(\gamma,\,x) \,\in\, \Gamma \times \text{SL}(2,\mathbb{C})\,\longrightarrow\,
u(\gamma^{-1}) x \gamma\,\in\, \text{SL}(2,\mathbb{C})\, .
$$
 
It is proved in \cite{Gh} that, for $u$ close enough to the trivial morphism, 
$\Gamma$ acts properly and freely on $\text{SL}(2, \mathbb{C})$ and the 
quotient $M(u, \Gamma)$ is a compact complex manifold (covered by $\text{SL}(2, 
\mathbb{C)}$). In general, these examples do not admit parallelizable manifolds as 
finite covers. Moreover, for generic $u$, the space of all holomorphic global vector 
fields on them is trivial. All manifolds $M(u, \Gamma)$ inherit a flat holomorphic 
projective structure (coming from the bi-invariant projective structure constructed 
above). Moreover, any small deformation of the manifold $\text{SL}(2, \mathbb{C})$ 
is isomorphic to $M(u, \Gamma)$ for some $u$ \cite{Gh}.

Therefore, we get the following:

\begin{theorem}[Ghys]
Complex compact parallelizable manifolds ${\rm SL}(2, {\mathbb C}) / \Gamma$ and their small
deformations admit a flat holomorphic projective structure.
\end{theorem}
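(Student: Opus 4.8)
The plan is to descend the bi-invariant flat holomorphic projective structure on $\mathrm{SL}(2,\mathbb{C})$ constructed just above to each of the quotients in Ghys's deformation family, and then to invoke Ghys's description of all small deformations.

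First I would make precise the bi-invariance being used. The holomorphic Riemannian metric on $\mathrm{SL}(2,\mathbb{C})$ coming from the Killing form is bi-invariant because the Killing form is $\mathrm{Ad}$-invariant; hence, by the uniqueness characterization of the Levi--Civita connection (torsionfree and metric-compatible), the associated holomorphic affine connection $\nabla$ is invariant under all left and all right translations of $\mathrm{SL}(2,\mathbb{C})$, and therefore so is its projective equivalence class. In other words, the honest (unbranched) holomorphic projective structure $(E_H,\,\theta)$ on $\mathrm{SL}(2,\mathbb{C})$, of the type defining a holomorphic projective structure in complex dimension $3$, carried by this projective class is preserved by every left translation $L_a$ and every right translation $R_b$.

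Second, fix a homomorphism $u\colon \Gamma\to\mathrm{SL}(2,\mathbb{C})$ close to the trivial one, so that by \cite{Gh} the action $(\gamma,\,x)\longmapsto u(\gamma^{-1})\,x\,\gamma$ of $\Gamma$ on $\mathrm{SL}(2,\mathbb{C})$ is free and properly discontinuous with compact quotient $M(u,\Gamma)$. Each deck transformation $x\longmapsto u(\gamma^{-1})\,x\,\gamma$ is the composition $R_\gamma\circ L_{u(\gamma^{-1})}$ of a left and a right translation, hence preserves $(E_H,\,\theta)$ by the previous paragraph. Since the quotient map $q\colon \mathrm{SL}(2,\mathbb{C})\to M(u,\Gamma)$ is a holomorphic covering, in particular a local biholomorphism, a holomorphic Cartan geometry descends along it: the $\Gamma$-invariant principal $H$-bundle $E_H$ and the $\Gamma$-invariant homomorphism $\theta$ descend to a principal $H$-bundle and a homomorphism $\bar\theta$ on $M(u,\Gamma)$ satisfying the conditions of a holomorphic projective structure, and the curvature of the induced connection downstairs pulls back under $q$ to the curvature upstairs, which vanishes; so the descended projective structure on $M(u,\Gamma)$ is flat. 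Taking $u$ trivial gives the parallelizable manifold $\mathrm{SL}(2,\mathbb{C})/\Gamma$ itself, for which right-invariance of the structure already suffices.

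Finally, by the last statement recalled from \cite{Gh}, every sufficiently small deformation of the complex manifold $\mathrm{SL}(2,\mathbb{C})/\Gamma$ is biholomorphic to $M(u,\Gamma)$ for some $u$ near the trivial homomorphism; combined with the preceding paragraph this proves the theorem. The two substantive ingredients, namely freeness and proper discontinuity of the $\Gamma$-action and the identification of small deformations with the manifolds $M(u,\Gamma)$, are Ghys's \cite{Gh}; the only point I would spell out with some care is the equivariant descent of the pair $(E_H,\,\theta)$ through the étale quotient $q$, together with the remark that flatness is preserved simply because curvature is a local quantity and $q$ is a local biholomorphism.
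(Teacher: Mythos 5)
Your proposal is correct and follows essentially the same route as the paper: the theorem there is presented as a summary of the preceding discussion, namely that the Killing form yields a bi-invariant flat holomorphic projective structure on $\mathrm{SL}(2,\mathbb{C})$ (via constant sectional curvature of the associated holomorphic Riemannian metric), which descends to each $M(u,\Gamma)$ because the $\Gamma$-action is by compositions of left and right translations, and every small deformation is some $M(u,\Gamma)$ by Ghys. Your added care about the equivariant descent of $(E_H,\theta)$ and the locality of curvature is a reasonable elaboration of what the paper leaves implicit.
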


It is not not known whether for generic homomorphisms $u$, complex manifolds $M(u, \Gamma)$ admit 
any other flat holomorphic projective structure apart from the standard one (that descends from the 
bi-invariant flat holomorphic projective structure on $\text{SL}(2,\mathbb{C})$ constructed 
above).

For some non-generic homomorphisms $u$, complex manifolds $M(u, \Gamma)$ also admit holomorphic 
Riemannian metrics with nonconstant sectional curvature \cite{Gh}. The associated holomorphic 
projective structures on those manifolds are not flat.

Recall here the main result in \cite{DZ}: 

\begin{theorem}[{\cite{DZ}}]
Let $M$ be a compact complex threefold endowed with a holomorphic Riemannian metric. Then $M$
admits a finite unramified covering bearing a holomorphic Riemannian metric of constant sectional
curvature (and hence has the associated flat holomorphic projective structure). 
\end{theorem}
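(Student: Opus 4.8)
The plan is to run a curvature-and-homogeneity argument in the spirit of Gromov's theory of rigid geometric structures, exploiting the strong constraints that come from the complex dimension being exactly $3$. First I would attach to the holomorphic Riemannian metric $g$ its Levi--Civita connection $\nabla$, a torsionfree holomorphic affine connection; the pair $(g,\nabla)$ is a holomorphic \emph{rigid} geometric structure on the compact manifold $M$ in Gromov's sense, equivalently a holomorphic Cartan geometry of type $(\mathbb{C}^3\rtimes\text{O}(3,\mathbb{C}))/\text{O}(3,\mathbb{C})$ as recalled above. The elementary but crucial remark is that every scalar polynomial invariant of the curvature tensor of $\nabla$ (the scalar curvature, $|\text{Ric}|^2_g$, $|R|^2_g$, contractions of higher covariant derivatives, \dots) is a \emph{globally defined holomorphic function} on $M$, hence is constant because $M$ is compact and connected. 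Thus $(M,g)$ is curvature homogeneous, in fact to infinite order.

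Next I would invoke Gromov's open-dense orbit theorem together with the integrability theorem for Killing fields of holomorphic rigid geometric structures on compact manifolds: the sheaf of local holomorphic Killing fields of $g$ has orbits of constant maximal dimension on a dense Zariski-open subset $M^\circ\subseteq M$, so $g$ is locally homogeneous on $M^\circ$; and since all curvature invariants are already constant on the whole of $M$, this local homogeneity propagates to all of $M$ (the locus of non-homogeneity is cut out by the vanishing of invariant derivatives that do not vanish anywhere, so it is empty). Consequently $(M,\nabla)$ is everywhere locally modelled on a homogeneous space $\mathcal{G}/\mathcal{I}$ for a complex Lie algebra $\mathcal{G}$ with isotropy $\mathcal{I}\subset\mathfrak{so}(3,\mathbb{C})\cong\mathfrak{sl}(2,\mathbb{C})$.

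Now comes the classification. In complex dimension $3$ the Weyl curvature vanishes, so the curvature of $g$ is completely determined by its Ricci tensor $\text{Ric}\in H^0(M,\text{S}^2(T^*M))$, and the Segre type together with the (constant) curvature invariants pin down the eigenvalue structure of the $g$-symmetric endomorphism $g^{-1}\text{Ric}$, which is therefore constant. I would then split into two cases. If $g^{-1}\text{Ric}$ is a constant multiple of the identity, then $g$ is Einstein; since $\dim_{\mathbb C}M=3$ this forces $g$ to have constant holomorphic sectional curvature $\kappa$, and rescaling $g$ by a nonzero complex constant reduces to $\kappa=0$ or $\kappa=1$, with local models $\mathbb{C}^3$ and the model carried by $\text{SL}(2,\mathbb{C})$ via its Killing form. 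If $g^{-1}\text{Ric}$ is not scalar, its (generalized) eigenspace decomposition produces nontrivial $g$-orthogonal holomorphic distributions on $M$ invariant under the Killing sheaf; running through the short list of complex $3$-dimensional Lie algebras $\mathcal{G}$ (abelian, Heisenberg, $\mathfrak{sl}(2,\mathbb{C})$, and the solvable ones), subject to the unimodularity constraint forced by the existence of a cocompact lattice, one checks that every such locally homogeneous non-constant-curvature model either admits no compact quotient at all or produces a compact quotient that is finitely covered by a complex torus or by a quotient of $\text{SL}(2,\mathbb{C})$ — and both of these carry a holomorphic Riemannian metric of constant sectional curvature.

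Finally I would pass from ``locally modelled on $\mathbb{X}$'', where $\mathbb{X}$ is $\mathbb{C}^3$ or $\text{SL}(2,\mathbb{C})$, to ``finitely covered by a quotient of $\mathbb{X}$''. After replacing $M$ by a finite unramified cover to absorb the components of $\text{O}(3,\mathbb{C})$ and any orientation obstruction, the developing map $\widetilde M\to\mathbb{X}$ of the associated flat holomorphic Cartan geometry is a local biholomorphism between complex manifolds of the same dimension with $\mathbb{X}$ simply connected; a completeness argument for rigid geometric structures on compact manifolds (this is where the compactness of $M$ is used in an essential way, completeness being in no way automatic in the holomorphic category) upgrades it to a biholomorphism onto $\mathbb{X}$, exhibiting the cover as a cocompact quotient of $\mathbb{X}$ by holomorphic isometries. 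The resulting metric has constant sectional curvature, and its Levi--Civita connection is projectively flat — precisely as for $\mathbb{C}^3$ and for $\text{SL}(2,\mathbb{C})$ with the Killing metric recalled above — so the cover carries the associated flat holomorphic projective structure. I expect the genuine obstacles to be, first, this completeness/developing-map step, and second, the case-by-case elimination of the locally homogeneous non-Einstein models on compact $3$-folds, both of which require real work beyond the formal outline above.
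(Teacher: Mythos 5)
The paper does not prove this statement: it is quoted verbatim as the main theorem of \cite{DZ}, so there is no internal proof to compare yours against. Your outline does reconstruct the general architecture of the argument in \cite{DZ} --- Levi--Civita connection, Gromov's open--dense orbit theorem for the local Killing algebra of a holomorphic rigid structure, constancy of scalar curvature invariants on a compact connected complex manifold, reduction to the Ricci endomorphism because the Weyl tensor vanishes in dimension $3$, and a case analysis of locally homogeneous models. But as a proof it has genuine gaps exactly at the points where \cite{DZ} does its real work.

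First, constancy of all scalar polynomial invariants of the curvature does \emph{not} give curvature homogeneity, let alone to infinite order: the polynomial invariants of the $\mathrm{O}(3,\mathbb{C})$-action on curvature tensors separate only \emph{closed} orbits, and the degenerate Segre types of $g^{-1}\mathrm{Ric}$ (nonzero nilpotent part) have the same invariants as their semisimple parts while lying in different orbits. These degenerate types are precisely the hard cases in \cite{DZ}, and your dichotomy ``scalar versus non-scalar $g^{-1}\mathrm{Ric}$ with eigenspace decomposition'' silently assumes diagonalizability. Second, the propagation of local homogeneity from Gromov's dense open subset to all of $M$ is asserted (``the locus of non-homogeneity is cut out by the vanishing of invariant derivatives that do not vanish anywhere''), not proved; extending the local Killing algebra across the degeneracy locus is the central global step of \cite{DZ} and requires a separate argument for each candidate model. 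Third, your final step invokes ``a completeness argument for rigid geometric structures on compact manifolds'': no such general principle exists --- there is no Hopf--Rinow theorem for holomorphic Riemannian metrics, and completeness of even the \emph{flat} ones on compact manifolds is itself a hard theorem. Fortunately that step is not needed for the statement, which only asks for a constant-curvature metric on a finite cover, not for the cover to be a complete quotient of the model; but the unimodularity constraint you use in the case analysis (``forced by the existence of a cocompact lattice'') presupposes exactly the global quotient structure you have not yet established.
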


Now we will describe the global geometry of holomorphic projective structures on complex parallelizable manifolds. Let us first prove the following.

\begin{lemma} \label{lemma can} Consider a holomorphic projective connection on a compact complex manifold $X$ with trivial canonical bundle. Then $X$ admits a holomorphic affine connection $\nabla$ which is 
projectively isomorphic to the given holomorphic projective connection.
\end{lemma}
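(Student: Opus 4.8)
The plan is to produce, from the given holomorphic projective connection, a genuine torsionfree affine connection lying in its projective equivalence class, and the key is that projective equivalence classes of torsionfree connections on a fixed $X$ form an affine space modelled on $H^0(X,\, \Omega^1_X)$ (the ``Weyl structures''), with the obstruction to picking a preferred representative controlled by $c_1(TX)\,=\,0$. First I would recall the local picture: a holomorphic projective structure of type $\text{PGL}(d+1,{\mathbb C})/Q$ on $X$ is, locally, an equivalence class of holomorphic affine connections, two being equivalent when related by the classical projective change $\nabla \longmapsto \nabla + \omega\otimes\text{Id} + \text{Id}\otimes\omega$ for a local holomorphic $1$--form $\omega$ (here $\omega\otimes\text{Id}$ means the symmetrized tensor); see the discussion after Definition \ref{def1} and \cite{MM,OT}. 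So on overlaps $U_i\cap U_j$ of a trivializing cover, local torsionfree representatives $\nabla_i$, $\nabla_j$ differ by such a $1$--form $\omega_{ij}$, and these $\omega_{ij}$ form a holomorphic $1$--cocycle with values in $\Omega^1_X$.

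The next step is to identify the cohomology class of $\{\omega_{ij}\}$ in $H^1(X,\, \Omega^1_X)$. The standard computation is that, up to a nonzero universal constant depending only on $d$, this class equals $c_1(TX)$ (equivalently, it is the obstruction to the projective structure admitting a global torsionfree affine representative, and it is measured by the trace part of the connection, which is exactly a connection on the anticanonical line bundle $K_X^{-1}$). Concretely, choosing local representatives $\nabla_i$ each inducing a connection on $\det TX$, the difference $1$--forms on $\det TX$ glue the $\nabla_i$ up to the Dolbeault representative of $(d+1)\,c_1(TX)$ or similar; since $K_X$ is trivial by hypothesis, $c_1(TX)\,=\,0$ in $H^1(X,\,\Omega^1_X)$, so the cocycle $\{\omega_{ij}\}$ is a coboundary: there exist local holomorphic $1$--forms $\omega_i$ on $U_i$ with $\omega_{ij}\,=\,\omega_i-\omega_j$.

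Finally I would set $\nabla\,:=\,\nabla_i + (\text{projective modification by }\omega_i)$ on each $U_i$; by construction these agree on overlaps and hence define a global holomorphic affine connection on $X$, which is torsionfree (each local piece is a projective modification of a torsionfree connection, and projective modifications preserve torsionfreeness) and lies in the given projective class by design. This proves the lemma. The main obstacle is the cohomological bookkeeping of the second step: one must pin down precisely which multiple of $c_1(TX)$ is represented by the gluing cocycle $\{\omega_{ij}\}$ and verify that it genuinely vanishes under the triviality of $K_X$ rather than merely its torsion; the cleanest route is to reduce everything to the induced structure on the line bundle $\det TX$, where a holomorphic projective connection canonically induces a ``projective connection on $K_X^{-1}$'' whose obstruction to being an honest holomorphic connection is exactly $c_1(K_X^{-1})\,=\,-c_1(K_X)\,=\,0$, and then use \cite[Theorem~4]{At} together with the trivialization of $K_X$ to conclude the relevant $H^1$ class vanishes. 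A secondary point to check carefully is holomorphy (as opposed to merely smoothness) of the corrections $\omega_i$, which follows because $K_X$ trivial holomorphically gives a holomorphic, not just $C^\infty$, splitting.
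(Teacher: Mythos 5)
Your argument is correct, but it is not the route the paper takes as its main proof; it is essentially the alternative the authors flag parenthetically (combining results of Gunning and Kobayashi--Ochiai). You glue local torsionfree representatives $\nabla_i$ and observe that the transition $1$--forms $\omega_{ij}$ form a cocycle whose class in $H^1(X,\,\Omega^1_X)$ is, up to the factor $d+1$ coming from taking traces, the obstruction to a holomorphic connection on $\det TX\,=\,K_X^{-1}$, i.e.\ the Atiyah class of $K_X^{-1}$; triviality of $K_X$ kills it, so the cocycle splits and you can correct the $\nabla_i$ to agree on overlaps. One point of precision: the class you need to vanish is the Atiyah class in $H^1(X,\,\Omega^1_X)$, not $c_1(TX)$ in de Rham or rational cohomology --- for non-K\"ahler $X$ these are genuinely different, and the lemma assumes $K_X$ holomorphically trivial rather than merely $c_1\,=\,0$; you do flag this and resolve it correctly (a holomorphically trivial line bundle carries a holomorphic connection, so its Atiyah class vanishes), but it should be phrased as the Atiyah class from the start, and note that \cite[Theorem~4]{At} goes in the wrong direction for this purpose. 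The paper's own proof sidesteps all cohomological bookkeeping: it fixes a global nowhere-vanishing holomorphic section $\omega$ of $K_X$ and, on each chart, takes the \emph{unique} affine representative of the local projective class for which $\omega$ is parallel \cite[Appendix A.3]{OT}; uniqueness then forces these normalized representatives to agree on overlaps with no splitting of a cocycle required. Your approach buys a statement of the precise obstruction (useful when $K_X$ is not trivial but only has vanishing Atiyah class); the paper's buys brevity and a canonical, coordinate-free choice of $\nabla$.
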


\begin{proof}
Let $X\,= \,\bigcup U_i$ be an open covering of $X$ such that on each $U_i$ there exists a
holomorphic affine connection $\nabla_i$ projectively equivalent to the given projective 
connection. Let $\omega$ be a global nontrivial holomorphic section of the canonical bundle
(it is trivial by assumption). On 
each $U_i$, there exists a unique holomorphic affine connection $\widetilde{\nabla}_i$ 
projectively equivalent to $\nabla_i$ satisfying the condition that $\omega$ is parallel with
respect to $\widetilde{\nabla}_i$ \cite[Appendix A.3]{OT}. By uniqueness, these $\widetilde{\nabla}_i$'s 
agree on the overlaps of the $U_i$'s and define a global holomorphic affine connection on $X$ 
projectively equivalent to the original holomorphic projective connection (for a different proof 
one can also combine two results in \cite[p. 96]{Gu} and \cite[p. 78--79]{KO}).
\end{proof} 

The following proposition is proved using Lemma \ref{lemma can}.

\begin{proposition}\label{proj paral} 
Let $G$ be a complex Lie group of dimension $d$ and $\Gamma$ a lattice in $G$. Then $X\,=\, G/ 
\Gamma$ admits a flat holomorphic projective structure if and only if there exists a Lie group 
homomorphism $i\,:\, \widetilde G \,\longrightarrow\, {\rm PGL}(d+1,{\mathbb C})$, where 
$\widetilde G$ is the universal cover of $G$, such that $i(\widetilde{G})$ acts with an open 
orbit on the standard model ${\mathbb C}P^d$.
\end{proposition}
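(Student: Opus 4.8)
The plan is to prove both implications by exploiting Lemma \ref{lemma can}, which reduces a holomorphic projective structure on $X = G/\Gamma$ to a holomorphic \emph{affine} connection $\nabla$ projectively equivalent to it (the canonical bundle of $G/\Gamma$ is trivial, being trivialized by a left-invariant top form on $G$ that descends). First I would treat the ``only if'' direction. Given a flat holomorphic projective structure on $X$, it pulls back to a flat holomorphic projective structure on $\widetilde X = \widetilde G$ (the universal cover, since $\Gamma$ is discrete and $G$ connected so $\pi_1(G/\Gamma)$ surjects onto $\Gamma$ up to $\pi_1(G)$; in any case one works on $\widetilde G$). Flatness means there is a developing map $\delta : \widetilde G \longrightarrow {\mathbb C}P^d$ which is a local biholomorphism (empty branching, since we are in the unbranched projective-structure case coming from an honest connection) together with a monodromy homomorphism $\rho : \pi_1(X) \longrightarrow {\rm PGL}(d+1,{\mathbb C})$. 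Because $\dim \widetilde G = d = \dim {\mathbb C}P^d$ and $\delta$ is a local biholomorphism, $\delta(\widetilde G)$ is open in ${\mathbb C}P^d$. The key point is that left translations of $\widetilde G$ by itself are automorphisms of the pulled-back projective structure (the affine connection $\nabla$ from Lemma \ref{lemma can} can be chosen left-invariant: the projective structure descends from $X$, hence is right-$\Gamma$-invariant, but one must check it is actually left-$G$-invariant — this is where a little care is needed, see below). Granting a left-invariant affine (hence projective) structure on $\widetilde G$, the developing map intertwines the left $\widetilde G$-action with a $\widetilde G$-action on ${\mathbb C}P^d$ by projective transformations, giving the homomorphism $i : \widetilde G \longrightarrow {\rm PGL}(d+1,{\mathbb C})$, and $i(\widetilde G)$ acts transitively on the open set $\delta(\widetilde G)$, hence with an open orbit.

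For the ``if'' direction, suppose such a homomorphism $i : \widetilde G \longrightarrow {\rm PGL}(d+1,{\mathbb C})$ exists with $i(\widetilde G) \cdot [p]$ open in ${\mathbb C}P^d$ for some point $[p]$. Let $P \subset \widetilde G$ be the stabilizer of $[p]$; then $i$ induces a $\widetilde G$-equivariant local biholomorphism $\widetilde G / P \longrightarrow {\mathbb C}P^d$ onto the open orbit, and since the open orbit carries the standard flat ${\rm PGL}(d+1,{\mathbb C})$-projective structure, $\widetilde G / P$ inherits a flat holomorphic projective structure invariant under left translations by $\widetilde G$. I would then pull this back along $\widetilde G \longrightarrow \widetilde G / P$ to get a left-$\widetilde G$-invariant flat holomorphic projective structure on $\widetilde G$ itself (pullback of a flat projective structure under a holomorphic submersion that is a local biholomorphism onto... — actually one needs $\dim P = 0$, i.e. $P$ discrete, which follows from the orbit being open and $\dim \widetilde G = d$; so $\widetilde G \longrightarrow \widetilde G/P$ is a covering and the pullback is a genuine flat projective structure). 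Being left-$\widetilde G$-invariant, it is in particular right-$\Gamma$-invariant after passing to $G$ (for $\Gamma \subset G$ a lattice, so that $\Gamma$ lifts appropriately), hence descends to a flat holomorphic projective structure on $X = G/\Gamma$.

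The main obstacle, in both directions, is the passage between \emph{left-invariance on $\widetilde G$} and \emph{descending to $G/\Gamma$}, together with the verification that the affine connection furnished by Lemma \ref{lemma can} can be taken invariant. Concretely: a projective structure on $G/\Gamma$ is by construction invariant under the right $\Gamma$-action, but the statement requires producing an $i$ defined on the \emph{universal cover} $\widetilde G$, which forces us to upgrade right-$\Gamma$-invariance to left-$\widetilde G$-invariance. The mechanism is uniqueness: on a complex parallelizable manifold the developing map and monodromy are essentially rigid, and the group of automorphisms of a flat projective structure on $\widetilde G$ containing $\Gamma$ with compact quotient must, by a Zariski-density / analytic-continuation argument, contain all left translations — this is in the spirit of the argument in the proof of Proposition \ref{parallelizable affine}, where one shows that globally defined structures pulled back to $G$ are automatically invariant because the relevant tensors are determined by constants. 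I would carry this out by observing that $\nabla_{V_i} V_j$, for $V_i$ the left-invariant frame, descends to $X$, is globally holomorphic, hence (by parallelizability of $X$ and triviality of holomorphic functions on the compact $X$) is a constant-coefficient combination of the $V_k$'s; this forces the pullback of $\nabla$ to $G$ to be left-invariant, and therefore the projective structure on $\widetilde G$ is left-invariant. Once invariance is established, the identification of the developing map with an orbit map and the resulting homomorphism $i$ is formal.
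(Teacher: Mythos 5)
Your proposal is correct and follows essentially the same route as the paper: the ``if'' direction via the orbit map $g\longmapsto i(g)\cdot o$ being a covering onto the open orbit, and the ``only if'' direction via Lemma \ref{lemma can} together with the constant--coefficient argument from the proof of Proposition \ref{parallelizable affine} to make the pulled-back connection translation-invariant on $\widetilde G$, whence the developing map conjugates the translation action into ${\rm PGL}(d+1,{\mathbb C})$. The only discrepancy is a left/right convention (with $\Gamma$ acting on the right, it is the right-invariant frame that descends to $G/\Gamma$, as in the paper), which does not affect the argument.
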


Note that the condition in the statement of Proposition \ref{proj paral} is equivalent to the 
existence of a Lie algebra homomorphism $\widetilde i$ from the Lie algebra of $G$ into the Lie 
algebra of $\text{PGL}(d+1,{\mathbb C})$, such that the image of $\widetilde i$ intersects 
trivially the Lie subalgebra of the stabilizer $Q$ of a point in ${\mathbb C}P^d$. A 
classification of those complex Lie algebras admitting such a homomorphism is done in \cite{K} 
(see also \cite{A} for the real case).

\begin{proof}[{Proof of Proposition \ref{proj paral}}]
First assume that there exists a group homomorphism $i\,:\, \widetilde{G}
\,\longrightarrow\,\text{PGL}(d+1,{\mathbb C})$ such that $i(\widetilde G)$ acts on
${\mathbb C}P^d$ with an open orbit $O \,\subset\, X$. Fix a point $o \,\in\, O$, and consider the map
$$ \pi\,:\, \widetilde G \,\longrightarrow\, O$$ defined by $\pi(g) \,=\, i(g) \cdot o$ for
all $g \,\in\, \widetilde G$. This map $\pi$ is a covering and the pull-back of the flat
holomorphic projective structure on $O$ through $\pi$ is a right-invariant flat holomorphic
projective structure on $\widetilde G$. This flat holomorphic projective structure on
$\widetilde G$ descends to
the quotient $X\,=\,\widetilde G / \widetilde \Gamma$, where $\widetilde \Gamma$ is the
inverse image of $\Gamma$ in the universal covering $\widetilde G$ of $G$.

To prove the converse, assume that $G / \Gamma$ is equipped with
a flat holomorphic projective structure. By Lemma \ref{lemma can}, there exists a holomorphic affine connection $\nabla$ on $G / \Gamma$ which is projectively equivalent 
to the given flat holomorphic projective structure. The proof of Proposition \ref{parallelizable affine} shows that the pull-back of $\nabla$ to $G$ is a right-invariant holomorphic affine connection. In particular,
the pull-back of the initial flat holomorphic projective structure to $G$ is right-invariant. It follows that the Lie algebra of $G$ acts locally projectively on the standard projective model ${\mathbb C}P^d$. Since the model is simply connected, this local action extends to a projective locally free global action of $\widetilde G$ on $\text{PGL}(d+1,{\mathbb C})$. This gives the required Lie group homomorphism $i$.
\end{proof}

It may be remarked that the Lie group homomorphism $i$ in the statement of Proposition \ref{proj 
paral} extends the monodromy homomorphism $\rho \,:\, \widetilde {\Gamma}\,\longrightarrow\, 
\text{PGL}(d+1,{\mathbb C})$ to a Lie group homomorphism $i$. The projective structures with
this property are called {\it homogeneous}.

In order to see that $\text{SL}(2, {\mathbb C})$ admits actions as in the statement of 
Proposition \ref{proj paral}, consider the irreducible linear action of $\text{SL}(2, {\mathbb C})$
on the vector space of homogeneous polynomials of degree $3$ in two variables (by linearly changing 
the variables). The projectivization of this linear action gives a projective action of 
$\text{SL}(2, {\mathbb C})$ on ${\mathbb C}P^3$ with an open orbit, namely the $\text{SL}(2, 
{\mathbb C})$-orbit of those polynomials that are product of three distinct linear forms 
(recall that the projective action of $\text{SL}(2, {\mathbb C})$ on the projective line 
${\mathbb C}P^1$ is transitive on the set of triples of distinct points).

\section{Calabi--Yau manifolds and branched Cartan geometries} \label{Calabi--Yau}

In this section we are interested in understanding branched holomorphic Cartan geometries on 
Calabi--Yau manifolds.

Recall that K\"ahler Calabi--Yau manifolds are compact complex K\"ahler manifolds 
with the property that the first Chern class (with real coefficients) of the 
holomorphic tangent bundle vanishes. By Yau's theorem proving Calabi's conjecture, 
those manifolds admit K\"ahler metrics with vanishing Ricci curvature \cite{Ya}. 
Compact K\"ahler manifolds admitting a holomorphic affine connection have vanishing 
real Chern classes \cite{At}; it was proved in \cite{IKO} using Yau's result that 
they must admit finite unramified coverings which are complex tori.

It was proved in \cite{BM} (see also \cite{D1,D3}) that Calabi--Yau manifolds bearing a holomorphic 
Cartan geometry admit finite unramified covers by complex tori. We extend here this result to 
branched holomorphic Cartan geometries.

\begin{theorem}
A compact (K\"ahler) Calabi--Yau manifold $X$ bearing a branched holomorphic
affine structure admits a finite unramified covering by a complex torus.
\end{theorem}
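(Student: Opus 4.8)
The plan is to reduce the branched case to the unbranched case treated in \cite{BM, IKO}, by showing that the branching divisor must in fact be empty. First I would let $(E_H,\, \theta)$ be a branched holomorphic affine structure on $X$, with $H \,=\, \mathrm{GL}(d,{\mathbb C})$ and $G \,=\, {\mathbb C}^d \rtimes \mathrm{GL}(d,{\mathbb C})$, and let $D$ be its branching divisor. Since $X$ is Calabi--Yau, we have $\mathrm{degree}(\Omega^1_X) \,=\, (c_1(\Omega^1_X) \cup \omega^{d-1}) \cap [X] \,=\, 0$, because $c_1(TX) \,=\, 0$ by definition of Calabi--Yau. On the other hand, as observed in the proof of Corollary \ref{corollaire deg}, the Lie algebra $\mathfrak h$ of $\mathrm{GL}(d,{\mathbb C})$ is self-dual as an $H$-module via the trace form, so $\mathrm{ad}(E_H) \,\cong\, \mathrm{ad}(E_H)^*$ and hence $\mathrm{degree}(\mathrm{ad}(E_H)) \,=\, 0$. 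Now Proposition \ref{thm1} (specifically equation \eqref{e7}) forces $\mathrm{degree}(D) \,=\, \mathrm{degree}(\Omega^1_X) - \mathrm{degree}(\mathrm{ad}(E_H)) \,=\, 0$; since $D$ is effective, this gives $D \,=\, 0$. Therefore the branched affine structure is an honest (unbranched) holomorphic affine structure.

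Once $D \,=\, 0$, the isomorphism $\theta \,:\, \mathrm{At}(E_H) \,\stackrel{\sim}{\longrightarrow}\, \mathrm{ad}(E_G)$ makes $(E_H,\,\theta)$ a holomorphic Cartan geometry of affine type in the classical sense, equivalently (as recalled in Section \ref{s3}) a holomorphic affine connection $\nabla$ on $TX$. At this point I would simply invoke the results already cited in the introduction to this section: by \cite{At}, a compact K\"ahler manifold admitting a holomorphic affine connection has all real Chern classes vanishing, and then by the theorem of \cite{IKO} (using Yau's theorem \cite{Ya}) such a manifold admits a finite unramified covering by a complex torus. Alternatively one can cite \cite{BM}, which covers holomorphic Cartan geometries of any type on Calabi--Yau manifolds and yields the same conclusion. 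This completes the argument.

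The only place requiring genuine input is the first paragraph, and even there the work has essentially been done: the key identity is Proposition \ref{thm1}, whose proof already extracts $\mathrm{degree}(\mathrm{ad}(E_G)) \,=\, 0$ from the existence of the holomorphic connection $\theta'$ on $E_G$ via \cite[Theorem~4]{At}. The main (very mild) obstacle is bookkeeping: one must be careful that $\mathrm{ad}(E_H)$ here means the bundle with fiber $\mathfrak{gl}(d,{\mathbb C})$ — not $\mathfrak{sl}$ — so that the trace-form self-duality applies verbatim and gives degree zero; and one must note that the Calabi--Yau hypothesis is used precisely (and only) to conclude $\mathrm{degree}(\Omega^1_X) \,=\, 0$. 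After that, the reduction to the torus is entirely a citation of \cite{IKO} and \cite{BM}, with no new analysis needed. It is worth emphasizing that the Calabi--Yau hypothesis is what rules out a nonzero branching divisor; for a manifold with $\mathrm{degree}(\Omega^1_X) \,>\, 0$ branched affine structures with nonempty branching locus can genuinely occur, as the projective analogue in Proposition \ref{algebraic proj struct} already suggests.
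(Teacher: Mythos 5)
Your proposal is correct and follows the paper's argument exactly: the paper simply cites part (ii) of Corollary \ref{corollaire deg} (whose proof is precisely your first paragraph, i.e.\ $\mathrm{degree}(\Omega^1_X)=0$ and $\mathrm{degree}(\mathrm{ad}(E_H))=0$ via the trace form force $D=0$ through Proposition \ref{thm1}) and then invokes \cite{IKO} for the unbranched case. You have merely inlined that corollary's proof rather than citing it.
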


\begin{proof}
Since $c_1(TX)\,=\, 0$, part (ii) in Corollary \ref{corollaire deg} implies that the branched holomorphic 
affine structure on $X$ is actually a holomorphic affine structure (connection). Hence $X$ admits a finite unramified covering by a
complex torus \cite{IKO}.
\end{proof}

\begin{theorem}\label{thm2}
Let $X$ be a compact simply connected K\"ahler manifold such that $c_1(TX)\,=\, 0$.
Let $E$ be a holomorphic vector bundle on $X$ equipped with a holomorphic connection.
Then $E$ is a trivial holomorphic vector bundle and $D$ is the trivial connection on it.
\end{theorem}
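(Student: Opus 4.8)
The plan is to exploit the interplay between the flat holomorphic structure on $E$ coming from the connection and the Ricci-flat Kähler metric guaranteed by Yau's theorem. A holomorphic connection on $E$ makes $E$ into a flat holomorphic bundle; since $X$ is simply connected, the monodromy representation $\pi_1(X) \to GL(r,\mathbb C)$ is trivial, so $E$ is \emph{holomorphically} trivial — wait, that argument only shows that the flat holomorphic structure (i.e.\ the local system of flat sections) is trivial, but the flat sections need not be holomorphic a priori. More carefully: a holomorphic connection $D$ on $E$ is automatically flat when $\dim_{\mathbb C} X = 1$, but in higher dimension one first needs $D$ to be flat. However, by \cite[Theorem~4]{At} the existence of a holomorphic connection forces all real Chern classes of $E$ to vanish; combined with $c_1(TX)=0$ and Yau's theorem one gets a Ricci-flat Kähler metric on $X$, and then one wants to deduce that $E$ admits a flat unitary (hence holomorphic and flat) structure.

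First I would recall that on a compact Kähler manifold, a holomorphic bundle $E$ with a holomorphic connection has $c_i(E)=0$ for all $i$ in real cohomology. The key step is then to invoke the theory of Hermite--Einstein/polystable bundles: on a Ricci-flat compact Kähler manifold, a holomorphic bundle with vanishing real Chern classes that admits a holomorphic connection is a \emph{flat} bundle in the strong sense — it carries a flat holomorphic connection and, being semistable of degree zero with $c_1=c_2=0$, it is given by a representation of $\pi_1(X)$ by a theorem in the spirit of Uhlenbeck--Yau / Simpson. Since $\pi_1(X)=0$, the representation is trivial, so $E$ is holomorphically trivial. The argument that such a bundle is polystable uses that any holomorphic connection on a subsheaf's quotient would force the sub to also have degree zero, so one gets a filtration by flat subbundles; then the graded pieces are stable flat bundles, hence (being degree $0$ with trivial Chern classes on a simply connected base) trivial, and one argues the extension splits.

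Once $E$ is known to be holomorphically trivial, say $E \cong \mathcal O_X^{\oplus r}$, the connection $D$ is of the form $d + \Omega$ where $\Omega \in H^0(X, \Omega^1_X \otimes \mathrm{End}(\mathbb C^r))$ is a global holomorphic $\mathrm{End}$-valued $1$-form. The final step is to show $\Omega = 0$. Here I would use that on a compact Kähler Calabi--Yau manifold that is simply connected, $H^0(X,\Omega^1_X)=0$: indeed $b_1(X)=0$ by simple connectedness, and by Hodge theory on the Kähler manifold $h^{1,0}(X) = \tfrac12 b_1(X) = 0$. Therefore every entry of $\Omega$ lies in $H^0(X,\Omega^1_X)=0$, so $\Omega=0$ and $D=d$ is the trivial connection. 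This also retroactively confirms flatness.

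\textbf{The main obstacle} I expect is the polystability/representation-theoretic step: passing from ``$E$ admits a holomorphic connection and has trivial real Chern classes'' to ``$E$ comes from a (necessarily trivial) flat unitary representation of $\pi_1$'' requires care, because a holomorphic connection need not be flat and need not be compatible with any Hermitian metric. One route around this that avoids Simpson's nonabelian Hodge theory: show directly that $E$ is semistable of degree $0$ (a destabilizing subsheaf would violate the Chern class / Atiyah-class constraints coming from the holomorphic connection, since the connection would induce one on an appropriate quotient forcing its degree to be $\geq 0$, and on the sub forcing degree $\leq 0$, hence $=0$), then apply the Uhlenbeck--Yau theorem to get a Hermite--Einstein metric, and use Lübke-type Chern class equality ($c_1^2$ and $c_2$ vanishing) to conclude $E$ is given by a flat unitary connection; then simple connectivity finishes it. The degree-bookkeeping for subsheaves, using that a holomorphic connection exists, is the technical heart and is where I would spend the most effort.
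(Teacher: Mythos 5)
Your overall skeleton matches the paper's: vanishing of $c_i(E)$ via Atiyah's theorem, semistability of $E$ from the Ricci--flat metric, a Simpson/Uhlenbeck--Yau type correspondence to conclude flatness, simple connectedness to get holomorphic triviality, and $H^0(X,\Omega^1_X)=0$ to force the connection to be the trivial one. The final step and the projective case are correct as you describe them. But there is a genuine gap at what you yourself call the technical heart, and it is not merely bookkeeping. First, Uhlenbeck--Yau produces a Hermite--Einstein metric only for \emph{polystable} bundles; semistability of degree zero is not enough, and $E$ is not known to be polystable at that stage (its polystability is essentially equivalent to the conclusion you are trying to prove). Second, your route to polystability --- ``one gets a filtration by flat subbundles'' --- tacitly assumes that a polystable degree-zero subsheaf $V\subset E$ is preserved by the holomorphic connection $D$. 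It need not be: $D$ only induces a second fundamental form $TX\to \mathrm{Hom}(V,E/V)$, and when this is nonzero no connection is induced on $V$ or on $E/V$, so no ``filtration by flat subbundles'' is available.

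Handling that case is where the paper spends most of its effort. It first shows, via Bogomolov's inequality, the Hodge index theorem and the Bando--Siu theorem, that $V$ is a projectively flat polystable bundle, hence (as $\pi_1(X)=0$) of the form $L^{\oplus r}$. If $V$ is \emph{not} $D$-invariant, the image of the second fundamental form $TX\to \mathrm{Hom}(V,E/V)$ is a polystable degree-zero quotient of the polystable bundle $TX$, hence a direct summand of the form $N^{\oplus t}$ with $N$ a degree-zero line bundle; Beauville's decomposition theorem then forces $X$ to have a one-dimensional simply connected Calabi--Yau factor, a contradiction. Only after excluding this case can one build the filtration and apply Simpson's correspondence (in the K\"ahler, non-projective case Simpson's result requires precisely such a filtration with polystable degree-zero graded pieces, not just semistability). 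Without an argument of this kind the non-invariant case is simply unaddressed in your proposal. A smaller point: the semistability of $E$ is itself a nontrivial input --- it uses polystability of $TX$ with $\mathrm{degree}(TX)=0$, cf.\ \cite{Bi} --- and your sketched mechanism, that ``the connection would induce one on an appropriate quotient,'' fails for the same reason as above, though the statement you need is available in the literature.
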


\begin{proof} 
The theorem of Yau says that $X$ admits a Ricci--flat K\"ahler metric \cite{Ya}. Fix a Ricci--flat K\"ahler
form $\omega$ on $X$. The degree of a torsionfree coherent analytic sheaf on $X$ will be defined
using $\omega$ (as in \eqref{deg}). From the given condition that $\omega$ is Ricci--flat
it follows that the tangent bundle $TX$
is polystable. Since $TX$ is polystable with $c_1(TX)\,=\, 0$, and $E$ admits a holomorphic
connection, it follows that $E$ is semistable \cite[p.~2830]{Bi}.

Note that $c_i(E)\,=\, 0$, $i\, \geq\, 1$, because $E$ admits a holomorphic connection
\cite[p.~192--193, Theorem~4]{At}. In particular, we have $\text{degree}(E)\,=\, 0$.

We will now recall a theorem of Simpson in \cite{Si}. Let $(Y,\,\omega_Y)$ be a compact
K\"ahler manifold of dimension $m$. The works of Corlette and Simpson,
\cite{Co}, \cite{Si0}, give a natural bijective correspondence between the complex vector
bundles on $Y$ with irreducible flat connection
and stable Higgs bundles $(V,\, \varphi)$ on $Y$ with
$\text{degree}(V)\,=\, 0\,=\, ch_2(V)\wedge \omega^{m-2}_Y$ (see \cite[p.~20, 
Corollary~1.3]{Si}). It
should be mentioned that if $(U,\, \mathcal{D})$ is a complex vector bundle on $Y$ with an irreducible flat
connection, and $(V, \, \varphi)$ is the polystable Higgs bundles on $Y$ corresponding to it, then
the holomorphic vector bundles on $Y$ underlying $U$ and $V$ need not coincide in general. They
do coincide when $\varphi\,=\, 0$. It should be mentioned that $\varphi\,=\, 0$ if and only if
the corresponding flat
connection $\mathcal{D}$ is unitary. In \cite{Si}, Simpson extended this correspondence to connections
which are not necessarily irreducible and Higgs bundles not necessarily polystable. He proved
an equivalence of categories between the following two:
\begin{enumerate}
\item The category of complex vector bundles $U$ on $Y$ with a flat connection $\mathcal{D}$.

\item The category of semistable Higgs bundles $(V, \,\varphi)$ on $Y$
with $\text{degree}(V)\,=\, 0\,=\, ch_2(V)\wedge \omega^{m-2}_Y$ and satisfying the
condition that $V$ admits a filtration of holomorphic subbundles such that each 
subbundle in the filtration is preserved by $\varphi$, and each successive quotient
for this filtration equipped with the Higgs field induced by $\varphi$ is polystable with
degree zero.
\end{enumerate}
(See \cite[p.~36, Lemma~3.5]{Si}.) When $Y$ is a complex projective manifold,
and the cohomology class of $\omega_Y$ is rational, Simpson improved
the above equivalence. For a complex projective polarized manifold $Y$ there is an equivalence of
categories between the following two:
\begin{enumerate}
\item The category of complex vector bundles $U$ on $Y$ with a flat connection $\mathcal{D}$.

\item The category of semistable Higgs bundles $(V, \,\varphi)$ on $Y$ with 
$\text{degree}(V)\,=\, 0\,=\, ch_2(V)\wedge \omega^{m-2}_Y$.
\end{enumerate}
(See \cite[p.~40, Corollary~3.10]{Si}.) In both these equivalences of
categories the holomorphic vector bundles underlying $U$ and $V$ do not coincide
in general. But they indeed coincide if $\varphi\,=\, 0$.

Therefore, setting $\varphi\, =\, 0$ in the first equivalence of categories we get
the following:

A holomorphic vector bundle $U$ on a compact K\"ahler manifold $(Y,\,\omega_Y)$
admits a flat holomorphic
connection if $U$ admits a filtration of holomorphic subbundles
such that each successive quotient $Q$ for the filtration
is polystable with $\text{degree}(Q)\,=\, 0\,=\, ch_2(Q)\wedge \omega^{m-2}_Y$.

Similarly, setting $\varphi\, =\, 0$ in the second equivalence of categories we get
the following:

A holomorphic vector bundle $U$ on a complex polarized projective manifold admits a flat 
holomorphic connection if $U$ is semistable with $\text{degree}(U)\,=\, 0\,=\, 
ch_2(U)\wedge \omega^{m-2}_Y$.

Consequently,
if the Calabi--Yau manifolds $X$ in the theorem is
{\it projective}, and the cohomology class of $\omega$ is rational,
then that $E$ admits a flat holomorphic connection,
because $E$ is semistable with vanishing Chern classes. Since $X$ is simply connected
all flat bundles on $X$ are trivial. Therefore, $E$ is the trivial vector bundle.
Since $H^0(X, \, \Omega^1_X)\,=\, 0$, the trivial holomorphic vector bundle has exactly
one holomorphic connection, namely the trivial connection. Hence
$D$ is the trivial connection on $E$.

We will now address the general K\"ahler case.

Let $V\, \subset\, E$ be a polystable subsheaf such that
\begin{itemize}
\item $\text{degree}(V)\,=\, 0$, and

\item the quotient $E/V$ is torsionfree.
\end{itemize}
The second condition implies that $V$ is reflexive. Since $E$ is semistable, and $V$ is polystable
with $\text{degree}(V)\,=\, 0\,=\, \text{degree}(E)$, it follows that $E/V$ is semistable
with $\text{degree}(E/V)\,=\, 0$.

Let $d$ be the complex dimension of $X$. Let the ranks of $V$ and $E/V$ be $r$ and $s$ respectively. 
Since $V$ and $E/V$ are semistable, we have the Bogomolov inequality
\begin{equation}\label{g1}
((2r\cdot c_2(V)-(r-1)c_1(V)^2)\cup \omega^{d-2})\cap [X]\, \geq\, 0\, ,
\end{equation}
\begin{equation}\label{g2}
((2s\cdot c_2(E/V)-(s-1)c_1(E/V)^2)\cup \omega^{d-2})\cap [X]\, \geq\, 0
\end{equation}
\cite[Lemma 2.1]{BM}.

We will show that the inequalities in \eqref{g1} and \eqref{g2} are equalities.
Denote the sheaf $E/V$ by $W$. We have
$$
2(r+s)c_2(V\oplus W)- (r+s-1)c_1(V\oplus W)^2
$$
$$
=\,
2(r+s)(c_2(V)+c_2(W)+c_1(V)c_1(W))- (r+s-1)(c_1(V)^2+c_1(W)^2+2c_1(V)c_1(W))
$$
$$
=\, \frac{r+s}{r}(2rc_2(V)-(r-1)c_1(V)^2)+\frac{r+s}{s}(2rc_2(W)-(s-1)c_1(W)^2)
$$
$$ 
-\frac{s}{r}c_1(V)^2-\frac{r}{s}c_1(W)^2 + 2c_1(V)c_1(W)
$$
$$
\,=\,\frac{r+s}{r}(2rc_2(V)-(r-1)c_1(V)^2)+\frac{r+s}{s}(2rc_2(W)-(s-1)c_1(W)^2) 
-\frac{1}{sr}(s\cdot c_1(V)-r\cdot c_1(W))^2\, .
$$
On the other hand, $c_i(V\oplus W)\,=\, c_i(E)\,=\, 0$, so
$$
\frac{r+s}{r}((2r\cdot c_2(V)-(r-1)c_1(V)^2)\cup \omega^{d-2})\cap [X]
+\frac{r+s}{s}((2rc_2(W)-(s-1)c_1(W)^2)\cup \omega^{d-2})\cap [X]
$$
\begin{equation}\label{g3}
-
\frac{1}{sr}((s\cdot c_1(V)-r\cdot c_1(W))^2 \cup \omega^{d-2})\cap [X]
\,=\,0\, .
\end{equation}
{}From Hodge index theorem (see \cite[Section 6.3]{Vo}) it follows that
$$
-\frac{1}{sr}((s\cdot c_1(V)-r\cdot c_1(W))^2 \cup \omega^{d-2})\cap [X]
\, \geq\, 0\, .
$$
Therefore, from \eqref{g3} we conclude that the inequalities in \eqref{g1} and \eqref{g2}
are equalities.

Since $((2r\cdot c_2(V)-(r-1)c_1(V)^2)\cup \omega^{d-2})\cap [X]\, =\, 0$, from
\cite[p.~40, Corollary~3]{BS} we conclude that $V$ is a polystable {\it vector bundle}
admitting a projectively flat unitary connection. Therefore, projective bundle $P(V)$
for $V$ is given by a representation of $\pi_1(X)$ in $\text{PU}(r)$. As $X$ is simply
connected, we conclude that the projective bundle $P(V)$ is trivial. Hence
\begin{equation}\label{V}
V\,=\, L^{\oplus r}\, ,
\end{equation}
where $L$ is a holomorphic line bundle on $X$. We have
$$
\text{degree}(L)\,=\, 0\, ,
$$
because $\text{degree}(V)\,=\, 0$.

Now assume that $V$ is preserved by the connection $D$ on $E$. Then $V$ is a 
subbundle of $E$, and the quotient $E/V$ has a holomorphic connection $D_1$ induced 
by $D$. Consequently, we may repeat the above arguments for $(E/V,\, D_1)$ and get a 
subsheaf $V_1\, \subset\, E/V$ which is a direct sum of line bundles of degree zero 
(as in \eqref{V}). Again assume that $V_1$ is preserved by $D_1$ and repeat the 
argument. In this way we get a filtration of $E$ by subbundles such that each 
successive quotient is a polystable vector bundle of degree zero. As explained 
before, a theorem of Simpson implies that $E$ admits a flat holomorphic connection. 
Since $X$ is simply connected, this implies that $E$ is holomorphically trivial. A 
trivial holomorphic vector bundle on $X$ has exactly one holomorphic connection 
because $H^0(X, \, \Omega^1_X)\,=\, 0$ (recall that $X$ is simply connected). 
Therefore, a trivial holomorphic vector bundle on $X$ has only the trivial 
connection.

Now assume the opposite, namely that $V$ is not preserved by the connection $D$ on 
$E$. Consider the holomorphic section of $\text{Hom}(V,\, E/V)\otimes \Omega^1_X$ 
given by $D$; it is nonzero because $V$ is not preserved by $D$. Let
\begin{equation}\label{de1}
\delta\, :\, TX\, \longrightarrow\, \text{Hom}(V,\, E/V)
\end{equation}
be the homomorphism given by this section.

The rank of $\text{Hom}(V,\, E/V)$ is $rs$. We have
\begin{equation}\label{dz}
\text{degree}(\text{Hom}(V,\, E/V))\,=\, r\cdot
\text{degree}(E/V)-s\cdot \text{degree}(V)\,=\, 0\, .
\end{equation}
We have $V^*$ to be semistable because $V$ is semistable. Now, since both $V^*$ and $E/V$ 
are semistable, it follows that $\text{Hom}(V,\, E/V)\,=\, (E/V)\otimes V^*$ is 
semistable \cite[Lemma 2.7]{AB}; recall that $V$ is locally free, so $(E/V)\otimes V^*$ is
torsionfree. Since $\text{Hom}(V,\, E/V)$ is semistable of
degree zero (shown in \eqref{dz}), and $TX$ is a polystable vector bundle of degree zero, we conclude that
the image $\delta(TX)$ in \eqref{de1} is also a polystable vector bundle of degree zero;
here we are using the fact that the image of a polystable sheaf in a semistable sheaf of
same slope (=\,degree/rank) is also polystable of the common slope.

Let $t$ be the rank of $U\,:=\, \delta(TX)$. We have
$$
(2rs\cdot c_2(\text{Hom}(V,\, E/V))-(rs-1)c_1(\text{Hom}(V,\, E/V))^2)
\cup \omega^{d-2})\cap [X]
$$
$$
=\, ((2r\cdot c_2(V)-(r-1)c_1(V)^2)\cup \omega^{d-2})\cap [X]
$$
$$
+
((2s\cdot c_2(E/V)-(s-1)c_1(E/V)^2)\cup \omega^{d-2})\cap [X]\,=\, 0\, .
$$
This implies that
$$
((2t\cdot c_2(U)-(t-1)c_1(U)^2)\cup \omega^{d-2})\cap [X]\,=\, 0\, ,
$$
because the Bogomolov inequality holds for both $U$ and $\text{Hom}(V,\, E/V)/U$. Indeed,
the Bogomolov inequality holds for all three terms in the short exact sequence
$$
0\, \longrightarrow\, U\, \longrightarrow\,\text{Hom}(V,\, E/V) \, \longrightarrow\,
\text{Hom}(V,\, E/V)/U \, \longrightarrow\, 0
$$
and furthermore it is an equality for $\text{Hom}(V,\, E/V)$; hence the
Bogomolov inequality is an equality for both $U$ and $\text{Hom}(V,\, E/V)/U$.

Again from \cite[p.~40, Corollary~3]{BS} we conclude that $P(U)$ admits a flat connection.
Hence $U$ is of the form
$$
U\,=\, N^{\oplus t}\, ,
$$
where $N$ is a holomorphic line bundle on $X$ of degree zero.

Since $TX$ is polystable, the quotient bundle $U$ is a direct summand of $TX$.
This implies that $U$ is a subbundle of $TX$. Hence we have a holomorphic
decomposition
\begin{equation}\label{dec}
TX\, =\, N\oplus N'\, ,
\end{equation}
where $N$ is a holomorphic line bundle on $X$ of degree zero, and the rank of $N'$
is $d-1$.

A result of Beauville \cite[Theorem A]{Be} associates to any holomorphic splitting 
$$TX \,=\,U_1 \oplus U_2 \oplus \ldots \oplus U_j$$ a corresponding decomposition $X 
\,=\,X_1 \times X_2 \times \ldots \times X_j$, with $X_i$ simply connected Calabi--Yau 
manifolds, such that $U_i \,=\, \pi_{i}^{*}(TX_i)$, where $p_i \,:\, X\,
\longrightarrow\, X_i$, $1\,\leq\, i\, \leq\, j$, are the canonical
projections. Now from \eqref{dec} we conclude that $X$
is a product of Calabi--Yau manifolds with one factor of dimension one. But there is
no simply connected compact Calabi--Yau manifold of complex dimension one. Therefore, we get a
contradiction. This completes the proof.
\end{proof}

\begin{corollary} \label{final}\mbox{}
\begin{enumerate}
\item[(i)] Any branched holomorphic Cartan geometry of type $G/H$, with $G$ complex affine Lie group, on a compact simply connected (K\"ahler) 
Calabi--Yau manifold is flat. Consequently, the model $G/H$ of the Cartan geometry must be 
compact.

\item[(ii)] Non-projective compact simply connected (K\"ahler) Calabi--Yau manifolds do not admit branched holomorphic
projective structures.
\end{enumerate}
\end{corollary}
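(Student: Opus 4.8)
The plan is to read off both assertions from Theorem \ref{thm2}, combined with the developing-map construction of Section \ref{sdm}.

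\smallskip
For part (i), I start with a branched holomorphic Cartan geometry $(E_H,\,\theta)$ of type $G/H$ on the compact simply connected K\"ahler Calabi--Yau manifold $X$, and pass to the associated holomorphic connection $\theta'$ on the principal $G$--bundle $E_G$ (see \eqref{thp}). Since $G$ is a complex affine (linear algebraic) group, it admits a faithful finite-dimensional holomorphic representation $\rho\,:\,G\,\hookrightarrow\,\text{GL}(n,{\mathbb C})$; form the associated holomorphic vector bundle $E_\rho\,:=\,E_G\times^G{\mathbb C}^n$, on which $\theta'$ induces a holomorphic connection $D_\rho$. By Theorem \ref{thm2}, $E_\rho$ is holomorphically trivial and $D_\rho$ is the trivial connection, hence flat. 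Now $\text{Curv}(D_\rho)$ is the image of $\text{Curv}(\theta')\,\in\,H^0(X,\,\text{ad}(E_G)\otimes\Omega^2_X)$ under the bundle homomorphism $\text{ad}(E_G)\,\hookrightarrow\,\text{End}(E_\rho)$ induced by the injective Lie algebra map $d\rho$; since that homomorphism is injective and $\text{Curv}(D_\rho)\,=\,0$, I conclude $\text{Curv}(\theta')\,=\,0$, i.e., the branched Cartan geometry is flat.

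\smallskip
Next I exploit flatness. Because $X$ is simply connected, Section \ref{sdm} provides a holomorphic developing map $\varpi\,:\,X\,\longrightarrow\,G/H$ that is a local biholomorphism over the complement of the branching divisor $D$; by Lemma \ref{lem1}, $D$ is a proper analytic subset (it is the complement of the nonempty open set where $\theta$ is an isomorphism), so $\varpi$ is open on a nonempty open subset and $\varpi(X)$ contains a nonempty open subset of $G/H$. Recall that $\dim X\,=\,\dim(G/H)$ (it follows from Definition \ref{def1} that $\dim X+\dim H\,=\,\dim G$), so $\dim\varpi(X)\,=\,\dim(G/H)$. As $X$ is compact, $\varpi$ is proper and $\varpi(X)$ is a closed analytic subvariety of $G/H$; since $G$ is connected, $G/H$ is a connected complex manifold, hence irreducible, and a closed analytic subvariety of an irreducible complex manifold of the same dimension is the whole manifold. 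Thus $\varpi(X)\,=\,G/H$, so $G/H$ is compact.

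\smallskip
For part (ii), a branched holomorphic projective structure on a $d$--dimensional $X$ is by definition a branched holomorphic Cartan geometry of type $\text{PGL}(d+1,{\mathbb C})/Q$, and $\text{PGL}(d+1,{\mathbb C})$ is a complex affine group; so part (i) applies and any such structure is automatically flat. Then Corollary \ref{corollary simply connected} (non-projective simply connected K\"ahler manifolds carry no branched flat holomorphic projective structure) forces $X$ to be projective; equivalently, a non-projective compact simply connected K\"ahler Calabi--Yau manifold admits no branched holomorphic projective structure. The only ingredients beyond Theorem \ref{thm2} are the passage from flatness of the associated $\text{GL}(n,{\mathbb C})$--connection to $\text{Curv}(\theta')\,=\,0$, immediate once $\rho$ is faithful, and the elementary fact about full-dimensional subvarieties of connected manifolds; I expect no real obstacle, since all the analytic difficulty has been absorbed into Theorem \ref{thm2}.
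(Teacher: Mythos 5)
Your proposal is correct and follows essentially the same route as the paper: pass to an associated vector bundle via a representation of the affine group $G$ with injective differential, invoke Theorem \ref{thm2} to get flatness of the induced connection, pull flatness back to $\theta'$ by injectivity of the Lie algebra map, and then use compactness of $X$ together with the developing map to conclude $\varpi(X)=G/H$, with part (ii) following from Corollary \ref{corollary simply connected}. The only differences are cosmetic: you spell out the surjectivity of the developing map (properness plus full dimension plus irreducibility of $G/H$) where the paper asserts it directly, and you assume a faithful representation where the paper's definition only supplies one with discrete kernel, which is all your argument actually uses.
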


Recall that $G$ is {\it a complex affine} Lie group if it admits 
a linear representation $\rho \,:\, G \,\longrightarrow\, {\rm GL}(N, {\mathbb C})$, for
some $N$, with discrete kernel. Complex semi-simple
Lie groups are complex affine (see Theorem 3.2, chapter XVII, in \cite{Ho}). 

\begin{proof}
Let $X$ be a simply connected Calabi--Yau manifold endowed with a branched 
holomorphic Cartan geometry of type $G/H$, with $G$ complex affine Lie group.

Let $\rho \,:\, G\,\longrightarrow\, {\rm GL}(N, {\mathbb C})$ be a linear representation
of $G$ with discrete kernel. The corresponding Lie algebra representation $\rho'\,:\,
\text{Lie}(G) \,\longrightarrow\,{\rm M}(N, {\mathbb C})$ is an injective.

Consider the principal bundle $E_G$ and the associated holomorphic vector bundle 
$E_{\rho}$ of rank $N$ on $X$ for the above homomorphism
$\rho$. Then $E_{\rho}$ inherits a holomorphic 
connection $\theta'_{\rho}$ and, by Theorem \ref{thm2}, this connection 
$\theta'_{\rho}$ must be flat. Since the curvature of $\theta'_{\rho}$ is the image 
of the curvature of the connection $\theta'$ of $E_G$ through $\rho'$, and $\rho'$ is 
injective, it follows that $\theta'$ is also flat.

Proof of (i):\ As shown above, Theorem \ref{thm2} implies that the associated holomorphic connection $\theta'$ of $E_G$ 
must be flat. Consequently, the Cartan geometry of type $G/H$ is flat. The developing map $\delta \,:\, X 
\,\longrightarrow\, G/H$ is a branched holomorphic map. This implies that $\delta(X)=G/H$ is 
compact.

Proof of (ii):\ This follows from part (i) and Corollary \ref{corollary simply connected}.
\end{proof}

\section*{Acknowledgements}

We thank the referee for pointing out \cite{Br}. This work was carried out while the first 
author was visiting the Universit\'e C\^ote d'Azur. He thanks the Universit\'e 
C\^ote d'Azur and Dieudonn\'e Department of Mathematics for their hospitality.



\begin{thebibliography}{ZZZZ}

\bibitem[Ag]{A} Y. Agaoka, Invariant flat projective structures on homogeneous 
spaces, \textit{Hokkaido Math. Jour.} \textbf{11} (1982), 125--172.

\bibitem[AB]{AB} B. Anchouche and I. Biswas, Einstein-Hermitian connections on 
polystable principal bundles over a compact K\"ahler manifold, \textit{Amer. Jour. 
Math.} \textbf{123} (2001), 207--228.

\bibitem[At]{At} M. F. Atiyah, Complex analytic connections in fibre
bundles, \textit{Trans. Amer. Math. Soc.} \textbf{85} (1957), 181--207.

\bibitem[BS]{BS} S. Bando and Y.-T. Siu, Stable
sheaves and Einstein-Hermitian metrics, in:
\textit{Geometry and analysis on complex manifolds},
pp. 39--50, World Sci. Publishing, River Edge, NJ, 1994.

\bibitem[Be]{Be} A. Beauville, Complex manifolds with split tangent bundle, 
\textit{Complex Analysis and Algebraic Geometry}, A volume in memory of Michael 
Schneider. ed.: Thomas Peternell and Frank-Olaf Schreyer, 61--70,
de Gruyter, 2000.

\bibitem[Bi]{Bi} I. Biswas, Vector bundles with holomorphic
connection over a projective manifold with tangent bundle
of nonnegative degree, \textit{Proc. Amer. Math. Soc.} \textbf{126}
(1998), 2827--2834.

\bibitem[BM]{BM} I. Biswas and B. McKay, Holomorphic Cartan geometries, Calabi--Yau 
manifolds and rational curves, \textit{Diff. Geom. Appl.} \textbf{28} (2010), 
102--106.

\bibitem[Br]{Br} R. Bryant, https://mathoverflow.net/questions/229569/intuition-for-the-cartan-connection-and-rolling-without-slipping-in-cartan-geo.

\bibitem[Co]{Co} K. Corlette, Flat G-bundles with canonical metrics,
\textit{Jour. Diff. Geom.} \textbf{28} (1988), 361--382.

\bibitem[Du1]{D} S. Dumitrescu, Homog\'en\'eit\'e locale pour les m\'etriques 
riemanniennes holomorphes en dimension $3$, \textit{Ann. Instit. Fourier}
\textbf{57} (2007), 739--773.

\bibitem[Du2]{D1} S. Dumitrescu, Structures g\'eom\'etriques holomorphes sur les 
vari\'et\'es compactes, \textit{Ann. Sci. \'Ec. Norm. Sup.} \textbf{34} (2001), 
557--571.

\bibitem[Du3]{D2} S. Dumitrescu, Connexions affines et projectives sur les surfaces 
complexes compactes, \textit{ Math. Zeit.} {\bf 264} (2010), 301--316.

\bibitem[Du4]{D3} S. Dumitrescu, Killing fields of holomorphic Cartan geometries, 
\textit {Monatsh. Math.} {\bf 161} (2010), 145--154.

\bibitem[Du5]{D4} S. Dumitrescu, Une caract\'erisation des vari\'et\'es 
parall\'elisables compactes admettant des structures affines, \textit{Com. Ren. Acad. 
Sci. Paris} \textbf{347} (2009), 1183--1187.

\bibitem[DZ]{DZ} S. Dumitrescu and A. Zeghib, Global rigidity of holomorphic 
Riemannian metrics on compact complex $3$-manifolds, \textit{Math. Ann.}
\textbf{345} (2009), 53--81.

\bibitem[Eh]{Eh} C. Ehresmann, Sur les espaces localement homog\`enes, 
\textit{L'Enseign. Math.} \textbf{35} (1936), 317--333.

\bibitem[Gh]{Gh} E. Ghys, D\'eformations des structures complexes sur les espaces 
homog\`enes de $SL(2, \mathbb{C})$, \textit{Jour. Reine Angew. Math.} \textbf{468}
(1995), 113--138.

\bibitem[Gu]{Gu} R. C. Gunning, {\it On uniformization of complex manifolds: the 
role of connections}, Princeton Univ. Press, 1978.

\bibitem[Ho]{Ho} G. P. Hochschild, {\it Basic theory of algebraic groups and Lie 
algebras}, Graduate Texts in Mathematics, 75. Springer-Verlag, New York-Berlin, 1981.

\bibitem[HM]{HM} A. T. Huckleberry and G. A. Margulis, Invariant analytic 
hypersurfaces, \textit{Invent. Math.} \textbf{71} (1983), 235--240.

\bibitem[IKO]{IKO} M. Inoue, S. Kobayashi and T. Ochiai, Holomorphic affine 
connections on compact complex surfaces, \textit{Jour. Fac. Sci. Univ. Tokyo}, Sect. 
IA Math. \textbf{27} (1980), 247--264.

\bibitem[Ka]{K} H. Kato, Left invariant flat projective structures on Lie groups 
and prehomogeneous vector spaces, \textit{Hokkaido Math. Jour.} \textbf{42} (2012), 
1--35.

\bibitem[KO]{KO} S. Kobayashi and T. Ochiai, Holomorphic projective structures on 
compact complex surfaces, \textit{Math. Ann.} \textbf{249} (1980), 75--94.

\bibitem[Ma1]{M1} R. Mandelbaum, Branched structures on Riemann surfaces, 
\textit{Trans. Amer. Math. Soc.} \textbf{163} (1972), 261--275.

\bibitem[Ma2]{M2} R. Mandelbaum, Branched structures and affine and projective 
bundles on Riemann surfaces, \textit{Trans. Amer. Math. Soc.} \textbf{183}
(1973), 37--58.

\bibitem[MM]{MM} R. Molzon and K. P. Mortensen, The Schwarzian derivative for maps 
between manifolds with complex projective connections, \textit{Trans. Amer. Math. 
Soc.} \textbf{348} (1996), 3015--3036.

\bibitem[OT]{OT} V. Ovsienko and S. Tabachnikov, {\it Projective Differential 
Geometry Old and New}, \textbf{165}, Cambridge Univ. Press, 2005.

\bibitem[Sh]{Sh} R. W. Sharpe, {\it Differential Geometry :
Cartan's Generalization of Klein's Erlangen Program}, Graduate Text Math., 166,
Springer-Verlag, New York, Berlin, Heidelberg, 1997.

\bibitem[Si1]{Si0} C.~T. Simpson, Constructing variations of {H}odge structure using
{Y}ang--{M}ills theory and applications to uniformization, {\it Jour. Amer. Math. Soc.}
\textbf{1} (1988), 867--918.

\bibitem[Si2]{Si} C. T. Simpson, Higgs bundles and local systems, \textit{Inst. Hautes 
\'Etudes Sci. Publ. Math.} \textbf{75} (1992), 5--95.

\bibitem[StG]{StG} H. P. de Saint Gervais, {\it Uniformization of Riemann Surfaces. 
Revisiting a hundred year old theorem}, E.M.S., 2016.

\bibitem[Vo]{Vo} C. Voisin, {\it Th\'eorie de Hodge en g\'eom\'etrie alg\'ebrique 
complexe}, Cours Sp\'ecialis\'es, Collection SMF, 2002.

\bibitem[Wa]{Wa} H.-C. Wang, Complex parallelisable manifolds, \textit{Proc. Amer. 
Math. Soc.} \textbf{5} (1954), 771--776.

\bibitem[Ya]{Ya} S.-T. Yau, On the Ricci curvature of a compact
K\"ahler manifold and the complex Monge--Amp\`ere equation.
I, {\it Comm. Pure Appl. Math.} \textbf{31} (1978), 339--411.

\end{thebibliography}
\end{document}